\documentclass[10pt]{amsart} 
\usepackage[margin=1in,letterpaper,portrait]{geometry}

\usepackage{latexsym,mathtools}
\usepackage{epsfig,setspace}
\usepackage{a4}
\usepackage{amssymb}
\usepackage{amsthm}
\usepackage{amsbsy} 
\usepackage{upgreek}
\usepackage{relsize}
\usepackage{tikz-cd}
\usepackage{ytableau}
\usepackage{amsfonts,amssymb,latexsym,amscd,amsmath,euscript,enumerate,verbatim,calc}
\allowdisplaybreaks
\usepackage{url}
\usepackage{hhline}
\usepackage{pifont}
\usepackage[toc,page]{appendix} 
\usepackage[colorlinks=true,linkcolor=blue,citecolor=magenta]{hyperref}

\theoremstyle{definition}

\newtheorem*{theorem*}{Theorem} 
\newtheorem{theorem}{Theorem}[section]    
\newtheorem{lemma}[theorem]{Lemma}
\newtheorem{proposition}[theorem]{Proposition}

\newtheorem{corollary}[theorem]{Corollary}
\newtheorem{defn}[theorem]{Definition}

\newtheorem{remark}[theorem]{Remark}

\def\Fq{{\mathbb F}_q}

\def\I{\mathcal{I}}

\newcommand{\diag}{\operatorname{diag}}

\newcommand{\GL}{\operatorname{GL}}
\newcommand{\T}{\operatorname{T}}
\newcommand{\M}{\operatorname{M}}

\allowdisplaybreaks
\makeatletter
\def\imod#1{\allowbreak\mkern10mu({\operator@font mod}\,\,#1)} 
\makeatother

\title[Polynomial Matrices and Splitting Subspaces]{Polynomial Matrices, Splitting Subspaces and Krylov Subspaces over Finite Fields} 

\author{Divya Aggarwal} 
\address{Indraprastha Institute of Information Technology Delhi (IIIT-Delhi), New Delhi 110020, India.}
\email{divyaa@iiitd.ac.in} 

\author{Samrith Ram}
\address{Indraprastha Institute of Information Technology Delhi (IIIT-Delhi), New Delhi 110020, India.}
\email{samrith@gmail.com}

\keywords{splitting subspace, Krylov space, antiinvariant subspace, polynomial matrix, invariant factor, finite field}  

\subjclass[2010]{05A15,11T99,15B33,05A05} 

\begin{document}
\begin{abstract}
  Let $T$ be a linear operator on an $\Fq$-vector space $V$ of dimension $n$. For any divisor $m$ of $n$, an $m$-dimensional subspace $W$ of $V$ is $T$-\emph{splitting} if
  $$
V =W\oplus TW\oplus \cdots \oplus T^{d-1}W,
$$
where $d=n/m$. Let $\sigma(m,d;\T)$ denote the number of $m$-dimensional $T$-splitting subspaces. Determining $\sigma(m,d;T)$ for an arbitrary operator $T$ is an open problem. This problem is closely related to another open problem on Krylov spaces. We discuss this connection and give explicit formulae for $\sigma(m,d;T)$ in the case where the invariant factors of $T$ satisfy certain degree conditions. A connection with another enumeration problem on polynomial matrices is also discussed.
\end{abstract}
\maketitle
\tableofcontents 

\section{Introduction}

Denote by $\Fq$ the finite field with $q$ elements and by $\Fq[x]$ the $\Fq$-algebra of polynomials in the indeterminate $x$. In what follows, $n,k,m,d$ will denote positive integers unless otherwise stated. For any ring $R$, the set of all $n\times k$ matrices over $R$ is denoted by $M_{n,k}(R)$ while $M_n(R)$ indicates the ring of $n\times n$ matrices over $R$. In what follows, we will always assume $k\leq n$. Define
\begin{align*}
 \M_q(n,k,d):=\{x^dI+x^{d-1}C_{d-1}+\cdots+C_0:C_i\in\M_{n,k}(\Fq)\},
\end{align*}
where $I$ denotes the $n\times k$ matrix whose $(i,j)$-th entry is 1 if $i=j$ and 0 otherwise. Evidently any element of $\M_q(n,k,d)$ is a polynomial with matrix coefficients which may also be viewed as a single $n\times k$ matrix over $\Fq$ whose entries are polynomials in $x$. Given elements $P,Q\in \M_q(n,k,d)$ write $P\sim Q$ ($P$ is equivalent to $Q$) if there exist invertible matrices $A\in M_n(\Fq[x])$ and $B\in M_k(\Fq[x])$ such that $APB=Q$. It can be shown that each $P\in M_q(n,k,d)$ is equivalent to a diagonal matrix
$$
P\sim \diag_{n,k}(p_1,\ldots,p_k),
$$
where $p_1,\ldots,p_k$ are monic polynomials over $\Fq$ satisfying $p_i\mid p_{i+1}$ for $1\leq i <k$. This diagonal form is called the Smith Normal Form \cite[p. 260]{MR0276251} of $P$. By a $k$-tuple of invariant factors, we mean a $k$-tuple $\I=(f_1,\ldots,f_k)$ where each $f_i$ is a monic polynomial over $\Fq$ and $f_i\mid f_{i+1}$ for $1 \leq i \leq k-1$. Given a $k$-tuple $\I=(p_1,\ldots,p_k)$ of invariant factors, define
\begin{equation*}
  \label{eq:nkdI}
 \mu_q(n,k,d;\I):=|\left\{P \in\M_q(n,k,d):P\sim\diag_{n,k}(p_1,\ldots,p_k)\right\}|.
\end{equation*} 
In other words, $\mu_q(n,k,d;\I)$ is the number of elements in $\M_q(n,k,d)$ whose Smith form comprises precisely the polynomials appearing in $\I$ as diagonal entries. The quantity $\mu_q(n,k,d;\I)$ is the main object of study in this paper. Determination of $\mu_q(n,k,d;\I)$ given an arbitrary assignment of the parameters in full generality is an open problem. To underscore the significance of studying $\mu_q$ we briefly mention specific cases in the literature where it has been considered previously in the context of group theory, probability theory, unimodularity and mathematical control theory. 

For $d=1$ and $k=n$, we have
$$
 \mu_q(n,n,1;\I):=|\left\{C_0 \in M_n(\Fq):xI+C_0\sim\diag_{n}(p_1,\ldots,p_k)\right\}|.
$$
Two matrices $A,B\in M_n(\Fq)$ are similar if and only if $xI-A$ and $xI-B$ have the same Smith form. Thus in this setting we have the matrix conjugacy class size problem: How many matrices are similar to a given matrix $A$ with invariant factors $\I=(p_1,\ldots,p_n)$? Denoting by $c(\I)$ the size of the centralizer in the general linear group $\GL_n(\Fq)$ corresponding to the conjugacy class indexed by $\I$, we have
$$
\mu_q(n,n,1;\I)=\frac{|\GL_n(\Fq)|}{c(\I)}.
$$
According to Stanley \cite[p. 108]{MR2868112} a precise expression (see \eqref{c(I)}) for the size of the centralizer $c(\I)$ was first given by Philip Hall based on earlier work by Frobenius.

The case $k=1$ is considered in Section \ref{centralizer}. In this case, we must have $\I=(g)$ for some monic polynomial $g$ and in this case it is not difficult to see that $\mu_q(n,1,d,(g))$ counts the number of $n$-tuples $(g_1,\ldots,g_n)$ of monic polynomials over $\Fq$ of degree $d$ such that $\gcd(g_1,\ldots,g_n)=g$. In particular, for $g=1$, this problem has been studied by Corteel, Savage, Wilf and Zeilberger \cite[Prop. 3]{MR1620873} and a nice answer is known in this case.

The case where $\I$ is a $k$-tuple of 1's corresponds to unimodularity. A polynomial matrix is \emph{unimodular} if its maximal minors are coprime. The case $d=1$ for arbitrary $n,k$ with $k<n$ and $\I=(1,\ldots,1)$ has been considered, albeit in a slightly different context, by Lieb, Jordan and Helmke \cite[Thm. 1]{MR3471061} who essentially prove that if $k<n$ are positive integers, then the number of matrices $A\in M_{n,k}(\Fq)$ for which $xI-A$ is unimodular is given by
  $$
\mu_q(n,k,1,(1,\ldots,1))=\prod_{i=1}^k (q^n-q^i).
$$
 This theorem has connections with mathematical control theory and answers a question of Kociecki and Przyłuski \cite{MR1019984} on the number of reachable linear systems over a finite field. We refer to the introduction of \cite{MR3462983} for these connections and the link with unimodularity. A recent generalization of this result in the setting of unimodular polynomial matrices appears in \cite[Thm. 4.1]{arora2020unimodular} and corresponds to the case of general $d$: 
  $$
\mu_q(n,k,d,(1,\ldots,1))=q^{nk(d-1)}\prod_{i=1}^k (q^n-q^i).
$$

It follows from Theorem \ref{thm1} that determining $\mu_q(n,k,d;\I)$ for $n=k$ is intimately connected with an open problem on splitting subspaces. To state the problem we require the definition of a splitting subspace.
 \begin{defn}
Let $T$ be a linear operator on an $md$-dimensional vector space $V$ over $\Fq$. An $m$-dimensional subspace $W$ of $V$ is said to be $T$-splitting if
$$
V =W\oplus TW\oplus \cdots \oplus T^{d-1}W.
$$
 \end{defn}  
 Let $\sigma(m,d;T)$ denote the number of $m$-dimensional $T$-splitting subspaces. Determining $\sigma(m,d;T)$ for an arbitrary operator $T$ is an open problem \cite[p. 54]{split} . The case $d=1$ is trivial while the case $m=1$ is considered in \cite[Prop. 4.4]{split}. The case where $T$ has an irreducible characteristic polynomial was settled by Chen and Tseng \cite[Cor. 3.4]{MR3093853} who proved a conjecture made in \cite{MR2831705}; the case of cyclic nilpotent $T$ has recently been settled in~\cite[Cor. 4.7]{aggarwal2021splitting}. Splitting subspaces were originally defined in slightly less generality by Niederreiter~\cite[p. 54]{MR1334623} in connection with his work on pseudorandom number generation. Splitting subspaces are also closely related to anti-invariant subspaces (Definition \ref{antiinvariant}) studied by Barrià and Halmos \cite{MR748946}, Sourour \cite{MR822138} and Knüppel and Nielsen~\cite{MR2013452}. The problem of determining $\sigma(m,d;T)$ has connections with an important unsolved problem on Krylov subspaces which we now discuss.        

  Let $T$ be a linear operator on an $N$-dimensional vector space $V$ over $\Fq$. Let $S=\{v_1,\ldots,v_m \}$ be a set of $m$ vectors in $V$. The {\it truncated Krylov subspace} \cite[p. 277]{MR1982139} of order $d$ generated by $S$ is defined by
 $$
 \mathrm{Kry}(T,S;d):=\left\{\sum_{i=1}^{m}f_i(T)v_i:f_i(x)\in \Fq[x]\mbox{ and } \deg f_i<d\right\}.
  $$
Define
  $$
\kappa_{m,d}(T):=\frac{1}{q^{Nm}}|\{(v_1,\ldots,v_m)\in V^m:\mathrm{Kry}(T,\{v_1,\ldots,v_m\};d)=V\}|.
$$
The number $\kappa_{m,d}(T)$ may be interpreted as the probability of selecting $m$ vectors $v_1,\ldots,v_m$ uniformly and independently from $V$ such that the truncated Krylov subspace of order $d$ spanned by them is all of $V$. Determining $\kappa_{m,d}(T)$ is useful in solving large sparse linear systems over finite fields which arise frequently in number theory and computer algebra. Krylov-based methods such as Wiedemann's algorithm are used to compute the minimal polynomials of large matrices over finite fields \cite{MR3424032}. For another instance, the Number Field Sieve which is a classical algorithm for factoring large integers relies on Krylov subspace methods \cite[p. 24]{bouillaguet2020parallel}.
 As the probability $\kappa_{m,d}(T)$ is relevant to the analysis of the efficiency of such algorithms, obtaining bounds on $\kappa_{m,d}(T)$ is a difficult and important \cite[p. 277]{MR1982139} problem. 
Proposition \ref{prop:krylovsigma} connects $\kappa_{m,d}(T)$ to splitting subspaces:
\begin{equation}
  \label{kappasigma}
 \kappa_{m,d}(T)=\frac{|\GL_m(\Fq)| \cdot \sigma(m,d;T)}{q^{m^2 d}}.  
\end{equation}
 In this paper, we prove that if $T$ is a linear operator on an $md$ dimensional vector space $V$ over $\Fq$, then $\sigma(m,d;T)>0$ if and only if the number of nonconstant invariant factors of $T$ is at most $m$. It is easily seen that the number $\sigma(m,d;T)$ depends only on the similarity class of $T$ \cite[Prop. 3.2]{aggarwal2021splitting} since a subspace $W$ is $T$-splitting if and only if $SW$ is $S\circ T\circ S^{-1}$ splitting for each linear isomorphism $S$ of $V$. Thus, given an $md$-tuple of invariant factors $\I$, one can define
 $$
\sigma(m,d;\I)=\sigma(m,d;T),
 $$
 where $T$ is any linear operator with invariant factors $\I$. By the positivity criterion for $\sigma(m,d;T)$ above, we may restrict ourselves to the case where the first $m(d-1)$ coordinates of the $md$-tuple $\I$ are equal to 1:
 \begin{align*}
 \I=(1,1,\ldots,1,p_1,p_2,\ldots,p_m).
 \end{align*}
For $\I$ as above, we prove the following.
\begin{theorem}
  If $\deg p_1=d$ and $p_1=g_1^{e_1}\cdots g_t^{e_t},$ where the $g_i$ are distinct irreducible polynomials with $\deg g_i=d_i(1\leq i\leq t)$, then
  $$
\sigma(m,d;\I)= \frac{\prod_{i=1}^{t}\prod_{j=1}^{m}(1-q^{-jd_i})}{\prod_{j=1}^{m}(1-q^{-j})}q^{m^2(d-1)}.
  $$
\end{theorem}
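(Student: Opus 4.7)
The plan is to compute $\sigma(m,d;\mathcal{I})$ via the connection \eqref{kappasigma} with the Krylov probability $\kappa_{m,d}(T)$, which is more tractable than counting splitting subspaces directly in this setting. First I would observe that the hypothesis $\deg p_1=d$ actually forces $p_1=p_2=\cdots=p_m$. Indeed, divisibility $p_1\mid p_i$ gives $\deg p_i\geq d$, while $\sum_i\deg p_i=\dim V=md$ forces equality, and then $p_i=p_1$ since they are monic of the same degree. Consequently $T$ is conjugate to multiplication by $x$ on the $\Fq[x]$-module $V=(\Fq[x]/(p_1))^m$.

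Next I would identify the Krylov condition with module generation. Since $d=\deg p_1$, the classes of $1,x,\ldots,x^{d-1}$ form an $\Fq$-basis of $\Fq[x]/(p_1)$, hence for any $v_1,\dots,v_m\in V$,
$$
\mathrm{Kry}(T,\{v_1,\ldots,v_m\};d) \;=\; \Fq[x]v_1+\cdots+\Fq[x]v_m.
$$
Therefore $q^{m^2 d}\kappa_{m,d}(T)$ equals the number of ordered $m$-tuples in $V^m$ that generate $V$ as an $\Fq[x]$-module.

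By the Chinese Remainder Theorem, $V=\bigoplus_{i=1}^t R_i^m$ where $R_i=\Fq[x]/(g_i^{e_i})$ is a local ring with residue field of order $q^{d_i}$, and an $m$-tuple generates $V$ if and only if its image in each factor $R_i^m$ does. Viewing such a tuple as an $m\times m$ matrix over $R_i$, Nakayama's lemma reduces the condition to invertibility of the reduction mod $g_i$, giving
$$
|\GL_m(R_i)| \;=\; |R_i|^{m^2}\prod_{j=1}^m(1-q^{-jd_i}) \;=\; q^{m^2 d_i e_i}\prod_{j=1}^m(1-q^{-jd_i}).
$$
Taking the product over $i$ and using $\sum_i d_i e_i=\deg p_1=d$, the number of generating $m$-tuples is $q^{m^2 d}\prod_{i=1}^t\prod_{j=1}^m(1-q^{-jd_i})$, so $\kappa_{m,d}(T)=\prod_{i=1}^t\prod_{j=1}^m(1-q^{-jd_i})$. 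Substituting into \eqref{kappasigma} together with $|\GL_m(\Fq)|=q^{m^2}\prod_{j=1}^m(1-q^{-j})$ yields the claimed expression.

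The only genuinely conceptual step is the first one: recognizing that the degree hypothesis collapses the invariant-factor structure to the isotypic module $(\Fq[x]/(p_1))^m$ and that in this regime truncated Krylov of order exactly $d=\deg p_1$ coincides with $\Fq[x]$-module generation. Once these two observations are in place, the enumeration is a routine assembly of CRT, Nakayama, and the standard order formula for $\GL_m$ over a finite local ring; going through $\kappa_{m,d}$ via \eqref{kappasigma} is what avoids the harder direct enumeration of $T$-splitting subspaces when $T$ is non-cyclic.
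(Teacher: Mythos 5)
Your argument is correct, but it takes a genuinely different route from the paper. The paper proves this as Theorem \ref{lastmsame} by combining three ingredients: the reduction Lemma \ref{star} (divide out $p_1$ using determinantal divisors), which gives $\mu_q(m,m,d;(g,\ldots,g))=1$ (Corollary \ref{cor:ggg}); the polynomial-matrix identity of Theorem \ref{thm1}, $\sigma(m,d;\I)=c(\I)\mu_q(m,m,d;\I)/\gamma_q(m)$; and the Hall--Frobenius centralizer formula \eqref{c(I)} evaluated at the partition with $m$ equal parts $e_i$. You instead bypass the polynomial-matrix machinery entirely: you use Proposition \ref{prop:krylovsigma} (which the paper states but does not use for this theorem), observe that when $\deg p_1=d$ the truncated Krylov space of order $d$ coincides with the $\Fq[x]$-submodule generated by $v_1,\ldots,v_m$ (valid since the minimal polynomial $p_m=p_1$ has degree $d$), and then count generating $m$-tuples of $(\Fq[x]/(p_1))^m$ by CRT, Nakayama, and $|\GL_m(R_i)|=|R_i|^{m^2}\prod_{j=1}^m(1-q^{-jd_i})$ for the local rings $R_i=\Fq[x]/(g_i^{e_i})$; all of these steps check out, and the final assembly with $\gamma_q(m)=q^{m^2}\prod_{j=1}^m(1-q^{-j})$ is correct. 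In effect your count of generating tuples is a self-contained re-derivation of $c(\I)=|\mathrm{Aut}_{\Fq[x]}(V)|$ in this isotypic case, since a generating $m$-tuple of the free rank-$m$ module $(\Fq[x]/(p_1))^m$ is the same as an automorphism; so your proof trades the quoted Hall formula and Theorem \ref{thm1} for an elementary local-ring computation. What the paper's route buys is uniformity: Theorem \ref{thm1} and Lemma \ref{star} also drive the $\deg p_1=d-1$ case (Corollary \ref{center}), whereas your key simplification---truncated Krylov of order $d$ equals module generation---is special to $\deg p_1=d$ and would not extend to those other cases. What your route buys is independence from the centralizer formula \eqref{c(I)} and from the block-companion Lemma \ref{lem:bcm}, needing only Proposition \ref{prop:krylovsigma} and standard commutative algebra.
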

\begin{theorem}
  If $\deg p_1=d-1$, then
  $$
\sigma(m,d;\I)=\frac{c(\I)}{c(\widetilde{\I})},
$$
where $\widetilde{\I}=(\tilde{p}_1,\ldots,\tilde{p}_m)$ with $\tilde{p}_i=p_i/p_1$ for $1\leq i\leq m$.
\end{theorem}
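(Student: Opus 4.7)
The plan is to combine Theorem \ref{thm1}, which relates $\sigma(m,d;\I)$ to the polynomial-matrix enumeration $\mu_q(m,m,d;\I)$, with a rigid factorization of matrices in $\M_q(m,m,d)$ that is forced by the hypothesis $\deg p_1 = d - 1$. In the form most convenient here, Theorem \ref{thm1} yields the identity
$$
\sigma(m,d;\I)\,|\GL_m(\Fq)| \;=\; c(\I)\,\mu_q(m,m,d;\I),
$$
so the theorem reduces to proving $\mu_q(m,m,d;\I) = |\GL_m(\Fq)|/c(\widetilde{\I})$, i.e.\ that this Smith-form count equals the size of the conjugacy class in $M_m(\Fq)$ indexed by $\widetilde{\I}$.

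The core step is the following factorization. Take $M(x) = x^d I_m - \sum_{j=0}^{d-1} x^j C_j \in \M_q(m,m,d)$ with Smith form $\diag(p_1,\ldots,p_m)$. Since $p_1$ is the $\gcd$ of all entries of $M$, every entry of $M$ is divisible by $p_1$. The degrees match perfectly under the hypothesis $\deg p_1=d-1$: a diagonal entry of $M$ is monic of degree $d$, so its quotient by $p_1$ is monic of degree $1$, while an off-diagonal entry has degree at most $d-1 = \deg p_1$, so its quotient by $p_1$ is a scalar. Hence there is a unique $A \in M_m(\Fq)$ with
$$
M(x) \;=\; p_1(x)\,(xI_m - A).
$$

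As $p_1$ is a scalar polynomial factor, $\snf(M) = p_1 \cdot \snf(xI_m - A)$, so $\snf(M) = \diag(p_1,\ldots,p_m)$ holds iff $A$ has invariant factors $(p_1/p_1,\ldots,p_m/p_1) = \widetilde{\I}$. Conversely, for any $A \in M_m(\Fq)$ with invariant factors $\widetilde{\I}$, the matrix $p_1(x)(xI_m - A)$ is monic of degree $d$ (leading coefficient $I_m$) with Smith form $\diag(\I)$. Thus $A \mapsto p_1(x)(xI_m - A)$ is a bijection, and counting the conjugacy class gives
$$
\mu_q(m,m,d;\I) \;=\; \bigl|\{A \in M_m(\Fq) : A \text{ has invariant factors } \widetilde{\I}\}\bigr| \;=\; \frac{|\GL_m(\Fq)|}{c(\widetilde{\I})}.
$$
Substituting into the identity from Theorem \ref{thm1} yields $\sigma(m,d;\I) = c(\I)/c(\widetilde{\I})$.

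The delicate point is the degree arithmetic in the factorization step: the equality $\deg p_1 = d - 1 = \deg M - 1$ forces $M/p_1$ to have degree exactly $1$, so it is an affine pencil $xI_m - A$ with $A \in M_m(\Fq)$, and the entire Smith-form count collapses onto a single conjugacy class of ordinary matrices. For other values of $\deg p_1$, the quotient $M/p_1$ lives in a higher-degree polynomial-matrix space and there is no such direct reduction to conjugacy classes in $M_m(\Fq)$; this is why the present case admits the clean closed form.
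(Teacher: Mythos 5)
Your proof is correct and follows essentially the same route as the paper: Theorem \ref{thm1}, followed by dividing out $p_1$ to reduce $\mu_q(m,m,d;\I)$ to a degree-one pencil count (this is exactly the paper's Lemma \ref{star} specialized to $\deg p_1 = d-1$), and then the conjugacy-class-size identity $\mu_q(m,m,1;\widetilde{\I}) = \gamma_q(m)/c(\widetilde{\I})$, which is the $n=k$ case of Theorem \ref{thmLAA} invoked in the paper via Corollary \ref{corLAA}. The only difference is packaging: you re-derive the needed instance of Lemma \ref{star} inline with explicit degree bookkeeping and cite only Hall's classical formula rather than the general $\mu_q(n,k,1;\I)$ theorem.
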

Note that $c(\I)$ (see \eqref{c(I)} for a precise expression) corresponds to a centralizer in $\GL_{md}(\Fq)$ while $c(\widetilde{\I})$ is associated with the group $\GL_m(\Fq)$. In conjunction with~\eqref{kappasigma}, the theorems above can be used to derive precise formulae for the probability $\kappa_{m,d}(T)$ for suitable values of $T$. 

\section{Existence of Splitting Subspaces}
\label{pre}

  If $T$ is a linear operator on a vector space $V$, then $T$ defines an $\Fq[x]$-module on the vector space $V$, where the action of $x$ is defined by $x\cdot v= Tv$ for $v \in V$. The $\Fq[x]$ module $V$ can be decomposed as a direct sum

$$V\simeq \bigoplus_{i=1}^{t}\bigoplus_{j=1}^{\ell_i} \frac{\Fq[x]}{(\phi_i^{\lambda_{i,j}})},$$
where $\phi_1, \ldots, \phi_t$ are distinct monic irreducible polynomials and, for each $i$, $\lambda_i= (\lambda_{i,1}, \lambda_{i,2}, \ldots ,\lambda_{i,\ell_i})$ is an integer partition corresponding to $\phi_i(1\leq i\leq t)$.

The finite set $\{(\phi_1,\lambda_1),\ldots,(\phi_t,\lambda_t)\}$ completely determines the {\it similarity class} of $T$ and corresponds uniquely to the invariant factors of $T$, i.e., the invariant factors of $xI-A$ where $A$ is the matrix of $T$ with respect to some basis. If $d_i=\deg \phi_i$, then the finite multiset 
$$
\tau=\{(d_1,\lambda_1),\ldots,(d_t,\lambda_t)\}
$$ 
is called the {\it similarity class type} (or simply type) of the linear operator $T$. The notion of similarity class type may be traced back to the work of Green \cite[p. 405]{MR72878} on the irreducible characters of the finite general linear groups. One of the main reasons for considering the similarity class type is that many combinatorial invariants associated with $T$ often depend only on the partitions $\lambda_i$ and the degrees of the polynomials $\phi_i$ (and not the polynomials themselves). The {\it size} of a similarity class type is the dimension of the vector space on which the corresponding operator is defined.
For any $k$-tuple of invariant factors $\I=(p_1,\ldots,p_k)$, define
$$
\deg \I := \deg (p_1 \cdots p_k).
$$
As stated in the introduction, the number of splitting subspaces $\sigma(m,d;T)$ depends only on the similarity class of $T$. Here is a precise definition.
\begin{defn}
\label{sigmaI}
Let $\I$ be an $md$-tuple of invariant factors with $\deg \I=md$. Then
$$
\sigma(m,d;\I) := \sigma(m,d;T),
$$
where $T$ is any linear operator on an $md$-dimensional vector space over $\Fq$ with invariant factors $\I$. 
\end{defn}
More generally, it is known \cite[Cor. 3.7]{aggarwal2021splitting} that the number of splitting subspaces $\sigma(m,d;T)$ depends only on the similarity class type of $T$. In other words, whenever there exists a linear transformation over $\Fq$ of similarity class type $\tau$, one can define
$$
\sigma(m,d;\tau):=\sigma(m,d;T),
$$
where $T$ is any linear operator of type $\tau$ defined on an $md$-dimensional vector space over $\Fq$.
For our purposes it will be more convenient to work with $\sigma(m,d;\I)$ but it is worth emphasizing that all results involving $\sigma(m,d;\I)$ may be reformulated in terms of similarity class type and we will include such examples.

Splitting subspaces are closely related to anti-invariant subspaces. 
\begin{defn}
  \label{antiinvariant}
Given a non-negative integer $\ell$, a subspace $W$ of $V$ is called \emph{$\ell$-fold $T$-anti-invariant} if 
$$
\dim(W+TW+\cdots+T^{\ell}W)=(\ell+1)\cdot \dim W.
$$
\end{defn} 
If $\dim V = md$, then every $m$-dimensional $T$-splitting subspace is $(d-1)$-fold $T$-anti-invariant.
Barría and Halmos \cite{MR748946} and Sourour \cite{MR822138} studied 1-fold anti-invariant subspaces and determined the maximum possible dimension of such a subspace. Knüppel and Nielsen (2003) extended their work to $\ell$-fold anti-invariant subspaces for arbitrary $\ell$.
In particular, they gave the following existence criterion.

\begin{proposition}\cite[Cor. 2.2]{MR2013452}
\label{existence}
Let $T$ be a linear operator on an $N$-dimensional vector space over $\Fq$ where $N=(\ell+1)m$. Suppose $(p_1,\ldots,p_N)$ is the $N$-tuple of invariant factors of $T$. Then an $\ell$-fold $T$-anti-invariant subspace of dimension $m$ exists if and only if $p_i=1$ for $1\leq i\leq \ell m$.
\end{proposition}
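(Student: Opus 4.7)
My plan is to analyze both directions through the $\Fq[x]$-module structure on $V$ in which $x$ acts as $T$.

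For necessity, suppose $W$ is an $m$-dimensional $\ell$-fold $T$-anti-invariant subspace. The defining equality $\dim(W+TW+\cdots+T^\ell W)=(\ell+1)m=N$ forces $V=W\oplus TW\oplus\cdots\oplus T^\ell W$. Picking any basis $w_1,\dots,w_m$ of $W$, consider the $\Fq[x]$-module homomorphism
$$
\phi\colon\Fq[x]^m\longrightarrow V,\qquad (f_1,\dots,f_m)\longmapsto\sum_{i=1}^m f_i(T)w_i.
$$
Because the vectors $T^jw_i$ for $0\le j\le\ell$, $1\le i\le m$ already span $V$ over $\Fq$, $\phi$ is surjective. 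Its kernel $K\subseteq\Fq[x]^m$ is a submodule of a free module over the PID $\Fq[x]$, hence itself free; and since $V$ is $\Fq[x]$-torsion, the rank of $K$ equals $m$. Applying Smith Normal Form to the inclusion $K\hookrightarrow\Fq[x]^m$ then presents $V\cong\bigoplus_{i=1}^m\Fq[x]/(r_i)$ with $r_1\mid\cdots\mid r_m$, and uniqueness of invariant factors identifies $r_i$ with $p_{\ell m+i}$. Thus $V$ has at most $m$ nonconstant invariant factors, i.e.\ $p_i=1$ for $1\le i\le\ell m$.

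For sufficiency, assume $p_i=1$ for $i\le\ell m$, set $q_i=p_{\ell m+i}$ and $d_i=\deg q_i$, so that $q_1\mid\cdots\mid q_m$ and $\sum d_i=(\ell+1)m$. Fix a cyclic decomposition $V=\bigoplus_i\Fq[x]v_i$ with the annihilator of $v_i$ equal to $(q_i)$. The goal is to produce vectors $w_1,\dots,w_m\in V$ so that $\{T^jw_i:0\le j\le\ell,\ 1\le i\le m\}$ is an $\Fq$-basis. When every $d_i=\ell+1$ one simply takes $w_i=v_i$. In general, one partitions the $(\ell+1)m$ standard basis vectors $T^pv_i$ into $m$ chains of length $\ell+1$ respecting the $T$-action within each cyclic summand as far as possible, and then builds each $w_i$ as a combination of a head vector from its chain plus tail corrections drawn from larger summands so that successive applications of $T$ traverse the $i$-th chain modulo lower-indexed chains; the divisibility $q_1\mid\cdots\mid q_m$ makes the required corrections available inside $V$, and the coordinate matrix of $\{T^jw_i\}$ becomes triangular with unit diagonal after a suitable ordering of the standard basis.

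I expect the principal obstacle to be the sufficiency construction, especially when some $d_i<\ell+1$ forces a chain to straddle two cyclic summands and the naive head $w_i=v_i$ fails (for example when $d_1<\ell+1$ and $x\mid q_2$, so the simple shift $T^{-b}$ is unavailable inside a single summand). A clean way to organize the argument is an induction on the total deficit $\sum_{d_i<\ell+1}(\ell+1-d_i)$: at each step one merges a short summand with a compensating piece of a longer summand to strictly decrease the deficit while preserving the invariant-factor shape of what remains, eventually reducing to the equal-degree base case.
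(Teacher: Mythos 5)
Your necessity argument is complete and correct: spanning of $V$ by the $(\ell+1)m$ vectors $T^jw_i$ shows $V$ is generated by $m$ elements as an $\Fq[x]$-module, and the Smith form of the presentation $K\hookrightarrow\Fq[x]^m$ gives at most $m$ nonconstant invariant factors, i.e.\ $p_i=1$ for $i\le \ell m$. But the converse -- the substantive half of the statement -- is not proved; it is only a plan, and the plan has a genuine gap. The ``partition into chains with tail corrections so that the coordinate matrix becomes triangular with unit diagonal'' is asserted, never established: when some $d_i<\ell+1$ the naive head dies under $T$ before its chain is exhausted, and the correction term must be chosen with care (for Jordan type $(x,x^5)$, $\ell=2$, $m=2$, the head $v_1+T^3v_2$ fails because $T^2$ kills it, while $v_1+T^2v_2$ works), so ``the divisibility $q_1\mid\cdots\mid q_m$ makes the required corrections available'' is doing all the work without any argument behind it. The proposed induction on the total deficit is likewise not a proof step as written: merging ``a short summand with a compensating piece of a longer summand'' does not produce a $T$-invariant decomposition -- the complement of a chain of iterates is not $T$-invariant -- so ``what remains'' has no invariant-factor shape to preserve, and you never show that the hypothesis ($N'=(\ell+1)m'$ with at most $m'$ nonconstant invariant factors) holds for the reduced configuration, nor how a solution there lifts back to $V$ past the correction terms already spent.

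For comparison: the paper does not prove this proposition at all. It quotes it from Kn\"uppel and Nielsen \cite[Cor.~2.2]{MR2013452} and only adds the observation that their proof (their Lemmas 3.1 and 3.2) never uses invertibility of $T$, so the statement holds for arbitrary operators. Those two lemmas are precisely the careful construction your sketch gestures at -- choosing a generator of a cyclic summand of maximal order and building the anti-invariant subspace inductively -- so the missing content of your sufficiency direction is exactly the cited result; to make your write-up self-contained you would need to carry out (or reproduce) an argument of that kind rather than appeal to a triangularity that is not verified.
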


It is important to note that \cite[Cor. 2.2]{MR2013452} is stated only for invertible operators. However, the proof \cite[Lem 3.1, Lem 3.2]{MR2013452} does not require the hypothesis that $T$ is invertible. 

Proposition \ref{existence} yields a criterion for the existence of splitting subspaces. 
\begin{corollary}
\label{im}
Let $\I=(p_1,\ldots,p_{md})$ with $\deg \I=md$. Then $\sigma(m,d;\I)>0$ if and only if $p_i=1$ for $1\leq i\leq m(d-1)$.
\end{corollary}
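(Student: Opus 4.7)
The plan is to deduce this corollary directly from Proposition \ref{existence} by observing that the splitting property is equivalent to the combination of $(d-1)$-fold anti-invariance together with a dimension count that forces the sum $W+TW+\cdots+T^{d-1}W$ to fill the whole space. The key numerical coincidence is that $\dim V = md = ((d-1)+1)\cdot m$, so for an $m$-dimensional subspace $W$, $(d-1)$-fold anti-invariance already means that the sum $W+TW+\cdots+T^{d-1}W$ has dimension exactly $\dim V$.

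For the forward direction, suppose an $m$-dimensional $T$-splitting subspace $W$ exists. I would unpack the definition $V=W\oplus TW\oplus\cdots\oplus T^{d-1}W$ to conclude $\dim(W+TW+\cdots+T^{d-1}W)=md=d\cdot\dim W$, so $W$ is $(d-1)$-fold $T$-anti-invariant. Then Proposition \ref{existence} applied with $\ell=d-1$ and $N=md$ yields $p_i=1$ for $1\le i\le m(d-1)$.

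For the converse, I would start from the hypothesis that $p_i=1$ for $1\le i\le m(d-1)$, invoke Proposition \ref{existence} (with the same choice $\ell=d-1$) to produce an $m$-dimensional $(d-1)$-fold $T$-anti-invariant subspace $W$, and then check that $W$ is in fact splitting. Anti-invariance gives $\dim(W+TW+\cdots+T^{d-1}W)=dm=\dim V$, so the sum equals $V$. Because the sum of the dimensions of the summands $W,TW,\ldots,T^{d-1}W$ is at most $dm$ and already equals the dimension of the sum, the sum must be direct, giving $V=W\oplus TW\oplus\cdots\oplus T^{d-1}W$.

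There is essentially no obstacle; the proof is a short bookkeeping argument on top of Proposition \ref{existence}. The only subtlety worth flagging is the remark already made in the excerpt that Proposition \ref{existence} is formally stated for invertible operators in \cite{MR2013452}, but the underlying lemmas do not use invertibility, so we may apply it to the arbitrary operator $T$ arising from $\I$.
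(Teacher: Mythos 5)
Your proof is correct and is exactly the argument the paper intends: the corollary is stated as an immediate consequence of Proposition \ref{existence} with $\ell=d-1$, using the observation (made just before that proposition) that when $\dim V=md$ an $m$-dimensional subspace is $T$-splitting precisely when it is $(d-1)$-fold $T$-anti-invariant, the directness following from the dimension count as you note. Your remark about invertibility not being needed in \cite{MR2013452} matches the paper's own caveat.
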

 
\begin{corollary}
  If $\tau=\{(d_1,\lambda_1),\ldots,(d_t,\lambda_t)\}$, then $\sigma(m,d;\tau)>0$ if and only if each partition $\lambda_i(1\leq i\leq t)$ has at most $m$ parts.
\end{corollary}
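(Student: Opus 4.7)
The plan is to reduce the corollary to Corollary~\ref{im} by translating between the two descriptions of the similarity class of $T$: the primary decomposition encoded by $\tau=\{(d_1,\lambda_1),\ldots,(d_t,\lambda_t)\}$, and the invariant factor decomposition encoded by the $md$-tuple $\I=(p_1,\ldots,p_{md})$. Since both descriptions determine the same similarity class, it suffices to show that the condition ``$p_i=1$ for $1\leq i\leq m(d-1)$'' is equivalent to ``every $\lambda_i$ has at most $m$ parts.''

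First I would recall the standard passage from primary decomposition to invariant factors. Writing each partition as $\lambda_i=(\lambda_{i,1}\geq\lambda_{i,2}\geq\cdots\geq\lambda_{i,\ell_i})$ and setting $\ell:=\max_{1\leq i\leq t}\ell_i$, extend each $\lambda_i$ to length $\ell$ by appending zeros. Then the invariant factors of $T$ are obtained (see, e.g., the standard derivation via the Chinese Remainder Theorem for $\Fq[x]$) by the formula
$$
p_{md-j+1}=\prod_{i=1}^{t}\phi_i^{\lambda_{i,j}},\qquad 1\leq j\leq \ell,
$$
and $p_i=1$ for $1\leq i\leq md-\ell$. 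In particular, since each factor $\phi_i^{\lambda_{i,j}}$ with $\lambda_{i,j}\geq 1$ contributes nontrivially, the number of nonconstant invariant factors is exactly $\ell=\max_i\ell_i$.

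Next I would invoke Corollary~\ref{im}: $\sigma(m,d;\tau)=\sigma(m,d;\I)$ is positive if and only if $p_i=1$ for $1\leq i\leq m(d-1)$, which is to say that there are at most $md-m(d-1)=m$ nonconstant invariant factors. By the previous paragraph this last condition is equivalent to $\max_i\ell_i\leq m$, i.e., every $\lambda_i$ has at most $m$ parts, completing the proof.

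I do not expect a genuine obstacle here; the only thing to be careful about is the bookkeeping in the correspondence between $\tau$ and $\I$, in particular the fact that the number of nonconstant invariant factors is precisely $\max_i\ell_i$ (rather than, say, $\sum_i\ell_i$ or any other combinatorial statistic of $\tau$). Everything else is a one-line application of Corollary~\ref{im}.
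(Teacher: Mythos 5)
Your proof is correct and is precisely the argument the paper leaves implicit: the corollary is stated as an immediate consequence of Corollary~\ref{im}, via the standard dictionary between the primary decomposition $\tau$ and the invariant factors, under which the number of nonconstant invariant factors equals $\max_i \ell_i$. Your bookkeeping of that dictionary (including the divisibility ordering forcing the constant factors to occupy the first positions) is accurate, so nothing is missing.
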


Recall the probability 
  $$
\kappa_{m,d}(T):=\frac{1}{q^{Nm}}|\{(v_1,\ldots,v_m)\in V^m:\mathrm{Kry}(T,\{v_1,\ldots,v_m\};d)=V\}|
$$
 defined in the introduction. The next proposition gives the connection between $\kappa_{m,d}(T)$ and $\sigma(m,d;T)$. In what follows we will denote the order of the general linear group $\GL_m(\Fq)$ by $\gamma_q(m) = (q^m - 1) (q^m - q) \ldots (q^m - q^{m-1})$.

\begin{proposition}
\label{prop:krylovsigma}  
Let $T$ be a linear operator on an $md$-dimensional vector space over $\Fq$. Then
$$
\kappa_{m,d}(T)=\frac{\gamma_q(m) \cdot \sigma(m,d;T)}{q^{m^2 d}}.
$$
\end{proposition}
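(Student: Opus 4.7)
The plan is to reduce the count defining $\kappa_{m,d}(T)$ to a count over $T$-splitting subspaces together with a choice of ordered basis, so that each splitting subspace contributes $\gamma_q(m)$ tuples.

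First I would unpack the Krylov condition. By definition, $\mathrm{Kry}(T,\{v_1,\ldots,v_m\};d)$ is spanned by the $md$ vectors $\{T^j v_i : 0\le j\le d-1,\ 1\le i\le m\}$. Since $\dim V = md$, the equality $\mathrm{Kry}(T,\{v_1,\ldots,v_m\};d)=V$ is equivalent to these $md$ vectors being linearly independent. In particular this forces $v_1,\ldots,v_m$ themselves to be linearly independent, so $W:=\mathrm{span}(v_1,\ldots,v_m)$ is an $m$-dimensional subspace with ordered basis $(v_1,\ldots,v_m)$.

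Next I would match this to the splitting condition. If the full set $\{T^j v_i\}$ is independent, then for each $j$ the vectors $T^j v_1,\ldots,T^j v_m$ are independent, hence span $T^j W$ with $\dim T^j W=m$; since their total span is $V$ of dimension $md=\sum_{j=0}^{d-1}\dim T^j W$, we conclude
$$
V = W\oplus TW\oplus\cdots\oplus T^{d-1}W,
$$
i.e.\ $W$ is $T$-splitting. Conversely, for any $T$-splitting subspace $W$ and any ordered basis $(v_1,\ldots,v_m)$ of $W$, the vectors $\{T^j v_i\}$ lie in the direct sum and span it (as $T^j$ applied to a basis of $W$ spans $T^j W$), so by dimension count they are linearly independent, and $\mathrm{Kry}=V$.

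Thus the set counted by the numerator in the definition of $\kappa_{m,d}(T)$ is in bijection with the set of pairs $(W,(v_1,\ldots,v_m))$ where $W$ is an $m$-dimensional $T$-splitting subspace and $(v_1,\ldots,v_m)$ is an ordered basis of $W$. Each $W$ admits $\gamma_q(m)=|\GL_m(\Fq)|$ ordered bases, so the numerator equals $\gamma_q(m)\cdot \sigma(m,d;T)$. Dividing by $q^{Nm}=q^{m^2 d}$ yields the stated formula. There is no serious obstacle here; the only thing to be careful about is the observation that $md$ vectors span an $md$-dimensional space iff they are independent, which is what bridges the Krylov formulation with the splitting formulation.
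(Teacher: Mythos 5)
Your proof is correct and follows essentially the same route as the paper: the Krylov condition $\mathrm{Kry}(T,\{v_1,\ldots,v_m\};d)=V$ holds precisely when $\mathrm{span}(v_1,\ldots,v_m)$ is an $m$-dimensional $T$-splitting subspace, and each such subspace contributes $\gamma_q(m)$ ordered bases, giving the numerator $\gamma_q(m)\cdot\sigma(m,d;T)$. You merely spell out the dimension-count equivalence in more detail than the paper does, which is harmless.
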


\begin{proof}
Let $S=\{v_1,\ldots,v_m\}\subseteq V$. By the definition of truncated Krylov subspace, we have $\mathrm{Kry}(T,S;d)=V$ if and only if $W=\mathrm{span}(S)$ is an $m$ dimensional $T$-splitting subspace. Since each $m$-dimensional subspace has precisely $\gamma_q(m)$ ordered bases, the proposition follows.
\end{proof}

\section{Splitting Subspaces and Polynomial Matrices}
\label{main}

In this section we show that $\sigma(m,d;\I)$ may be recovered from $\mu_q(m,m,d;\I)$, where $\mu_q$ is as defined in the introduction. We begin with the following lemma concerning the equivalence of matrices.

\begin{lemma}\label{lem:bcm}\cite[Thm. 1.1]{MR3396732}
Let $P = x^d I+x^{d-1}C_{d-1}+\cdots+C_0 \in\M_q(m,m,d)$. Consider the $md \times md$ block matrix
$$
A=
\begin{bmatrix}
\bf{0} & \bf{0} & \dots & \bf{0} & -C_0 \\
{I}& \bf{0} & \dots & \bf{0} & -C_1 \\
\bf{0} & {I} & \dots & \bf{0} & -C_2 \\
\vdots & \vdots & \ddots & \vdots & \vdots \\
\bf{0} & \bf{0} & \dots & {I} & -C_{d-1}
\end{bmatrix}.
$$
Then 
$$
xI - A \sim 
\begin{bmatrix}
I & \bf{0}\\
\bf{0} & P
\end{bmatrix},
$$
where $I$ and $\bf{0}$ denote the identity and zero matrices of appropriate sizes. In particular, if $P$ has invariant factors $(p_1,\ldots,p_m)$, then $xI-A$ has invariant factors $(1,\ldots,1,p_1,\ldots,p_m).$
\end{lemma}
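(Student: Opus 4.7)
The plan is to exhibit block-row and block-column operations over $\Fq[x]$ that transform $xI-A$ into $\diag(I_{(d-1)m},P)$, after which the statement on invariant factors follows by uniqueness of Smith normal form. Viewed as a $d\times d$ array of $m\times m$ blocks, the nonzero blocks of $xI-A$ are: $xI$ at $(i,i)$ for $1\le i\le d-1$ and $xI+C_{d-1}$ at $(d,d)$; $-I$ at $(i+1,i)$ for $1\le i\le d-1$; and $C_{i-1}$ at $(i,d)$ for $1\le i\le d-1$.

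For the first phase, I would successively, for $k=2,3,\ldots,d$, add $x^{k-1}I$ times block-row $k$ to block-row $1$. An induction shows that after step $k$ with $k<d$, block-row $1$ equals
$$
(\,0,\ldots,0,\ x^{k}I,\ 0,\ldots,0,\ C_0+xC_1+\cdots+x^{k-1}C_{k-1}\,),
$$
with $x^{k}I$ in position $k$: the $-I$ at $(k,k-1)$ annihilates the $x^{k-1}I$ carried over from $(1,k-1)$, the $xI$ at $(k,k)$ deposits the new $x^{k}I$ at $(1,k)$, and $C_{k-1}$ at $(k,d)$ contributes $x^{k-1}C_{k-1}$ to $(1,d)$. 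At the final step $k=d$, the $-I$ at $(d,d-1)$ again annihilates $x^{d-1}I$ in $(1,d-1)$ while the $xI+C_{d-1}$ at $(d,d)$ contributes $x^{d}I+x^{d-1}C_{d-1}$ to $(1,d)$, so the final value of $(1,d)$ is $(C_0+xC_1+\cdots+x^{d-2}C_{d-2})+(x^{d}I+x^{d-1}C_{d-1})=P$, and block-row $1$ reads $(0,\ldots,0,P)$. Block-rows $2,\ldots,d$ remain unchanged throughout.

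For the second phase, I would use column operations. First, for $i=1,2,\ldots,d-2$ in this order, adding $xI$ times block-column $i$ to block-column $i+1$ uses the $-I$ at $(i+1,i)$ to kill the $xI$ at $(i+1,i+1)$, without disturbing block-row $1$ (which is zero in these columns) or any column to the right. After this sweep, block-column $j$ for $1\le j\le d-1$ has a single nonzero entry, namely $-I$ at row $j+1$, while block-column $d$ still has $P,C_1,C_2,\ldots,C_{d-2},xI+C_{d-1}$ in rows $1,2,\ldots,d$ respectively. Next, for $i=1,\ldots,d-2$, adding $C_i$ times block-column $i$ to block-column $d$ replaces the $(i+1,d)$ entry by $C_i+(-I)C_i=0$ and leaves other entries of block-column $d$ alone, and finally adding $(xI+C_{d-1})$ times block-column $d-1$ to block-column $d$ replaces the $(d,d)$ entry by $(xI+C_{d-1})+(-I)(xI+C_{d-1})=0$. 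Block-column $d$ is now zero except for $P$ at row $1$, so the matrix has $P$ at $(1,d)$, $-I$ at $(i+1,i)$ for $1\le i\le d-1$, and zeros elsewhere. A cyclic block-row permutation sending block-row $1$ to the bottom, followed by negating the first $d-1$ block-rows, yields $\diag(I_{(d-1)m},P)$. Since $P\sim\diag_{m,m}(p_1,\ldots,p_m)$ in Smith form and $1\mid p_1\mid\cdots\mid p_m$, we obtain $xI-A\sim\diag_{md,md}(1,\ldots,1,p_1,\ldots,p_m)$ in Smith form, giving the claimed invariant factors. The main obstacle is the careful bookkeeping in the second phase: the column operations must be executed in exactly the stated order so as not to re-pollute entries that have already been cleared.
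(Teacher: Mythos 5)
Your proof is correct. The paper does not prove this lemma at all---it simply cites it as Theorem~1.1 of Gohberg--Lancaster--Rodman's \emph{Matrix Polynomials}---and your argument is essentially the standard linearization proof given there, realized as explicit unimodular block row and column operations over $\Fq[x]$: phase~1 correctly collapses block-row~1 to $(0,\ldots,0,P)$, phase~2 correctly clears the last block-column and the diagonal $xI$ blocks, and the final permutation, sign change, and appeal to uniqueness of the Smith normal form are all in order.
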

Given $\I=(p_1,\ldots,p_m)$, the above lemma implies that $\mu_q(m,m,d;\I)$ is equal to the number of matrices $A$ of the block form above whose invariant factors are $\I=(1,\ldots,1,p_1,\ldots,p_m)$. In view of Corollary \ref{im}, given any $m$-tuple $\I=(p_1,\ldots,p_m)$ we interpret $\sigma(m,d;\I)$ to mean $\sigma(m,d;\I')$ where $\I'=(1,\ldots,1,p_1,\ldots,p_m)$ is the $md$-tuple obtained from $\I$ by padding $m(d-1)$ ones. As a natural extension we will write $c(\I)$ to mean $c(\I')$ hereafter.
\begin{theorem}
\label{thm1}
Let $\I=(p_1,\ldots,p_m)$ be an $m$-tuple of invariant factors with $\deg \I=md$. Then
$$
\sigma(m,d;\I)=\frac{c(\I)}{\gamma_q(m)}\mu_q(m,m,d;\I).
$$
\end{theorem}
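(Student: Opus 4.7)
The plan is a double counting of the set
\[
X := \bigl\{(T, v_1,\ldots,v_m) : T \in \operatorname{End}(V) \text{ has invariant factors } \I',\ (v_1,\ldots,v_m) \text{ is an ordered basis of an } m\text{-dim.\ } T\text{-splitting subspace}\bigr\},
\]
where $V$ is a fixed $md$-dimensional $\Fq$-vector space and $\I' = (1,\ldots,1,p_1,\ldots,p_m)$ is the padded tuple. The bridge between the two sides of the claimed identity is Lemma \ref{lem:bcm}, which translates information about $T$ into information about an element of $\M_q(m,m,d)$.

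For the first count, I would condition on $T$. The number of operators $T \in \operatorname{End}(V)$ with invariant factors $\I'$ is $|\GL_{md}(\Fq)|/c(\I)$ by orbit-stabilizer for the conjugation action, and each such $T$ admits $\sigma(m,d;\I)$ splitting subspaces, each with exactly $\gamma_q(m)$ ordered bases. Hence
\[
|X| \;=\; \frac{|\GL_{md}(\Fq)|}{c(\I)} \cdot \sigma(m,d;\I) \cdot \gamma_q(m).
\]

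For the second count, I would set up a bijection between $X$ and the set $Y$ of pairs $(\mathcal{W}, P)$ where $\mathcal{W} = (w_1,\ldots,w_{md})$ is an ordered basis of $V$ and $P \in \M_q(m,m,d)$ has Smith form $\diag_{m,m}(p_1,\ldots,p_m)$. In the forward direction, send $(T, v_1,\ldots,v_m)$ to the basis
\[
\mathcal{W} \;=\; (v_1,\ldots,v_m,\ Tv_1,\ldots,Tv_m,\ \ldots,\ T^{d-1}v_1,\ldots,T^{d-1}v_m),
\]
together with the unique $P \in \M_q(m,m,d)$ for which the matrix of $T$ in $\mathcal{W}$ equals the block matrix $A$ of Lemma \ref{lem:bcm}; the splitting condition guarantees that $\mathcal{W}$ is indeed a basis, and Lemma \ref{lem:bcm} guarantees that $P$ has the prescribed Smith form since $T$ has invariant factors $\I'$. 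Conversely, given $(\mathcal{W},P) \in Y$, declare $T$ to be the operator whose matrix in $\mathcal{W}$ is $A(P)$ and set $v_i := w_i$; reading off $A(P)$ column-by-column forces $w_{jm+i} = T^j v_i$, so $(v_1,\ldots,v_m)$ spans an $m$-dimensional $T$-splitting subspace, and Lemma \ref{lem:bcm} once more ensures $T$ has invariant factors $\I'$. These constructions are mutually inverse, giving
\[
|X| \;=\; |Y| \;=\; |\GL_{md}(\Fq)| \cdot \mu_q(m,m,d;\I).
\]

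Equating the two expressions for $|X|$ and solving for $\sigma(m,d;\I)$ yields the stated formula. I do not anticipate a serious conceptual obstacle, since both counts reduce to routine applications of orbit-stabilizer and of Lemma \ref{lem:bcm}; the only delicate point is the book-keeping in the bijection, specifically verifying that the identification $w_{jm+i} \leftrightarrow T^j v_i$ matches the block structure of $A(P)$ so that the correspondence really is a bijection onto all of $Y$ and not just part of it.
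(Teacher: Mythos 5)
Your proof is correct and takes essentially the same approach as the paper: the bijection in your second count is exactly the paper's correspondence between ordered bases of splitting subspaces and block companion matrices furnished by Lemma \ref{lem:bcm}, and your orbit--stabilizer step plays the role of the paper's observation that precisely $c(\I)$ bases of $V$ yield a given matrix for $T$. The only cosmetic difference is that you double count over the entire similarity class (introducing a factor $\gamma_q(md)$ that cancels), whereas the paper fixes a single operator $T$ with invariant factors $\I$.
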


\begin{proof}
Let $T$ be a linear operator on an $md$-dimensional vector space with invariant factors $\I$ and let $W$ be an $m$-dimensional $T$-splitting subspace. Suppose $\mathcal{B}_W=\{v_1,\ldots,v_m\}$ is an ordered basis for $W$. Then an ordered basis for $V$ is given by
$$
\mathcal{B}_V= \{ v_1,\ldots,v_m, Tv_1,\ldots,Tv_m,\ldots,T^{d-1}v_1,\ldots,T^{d-1}v_m \}.
$$ 
The matrix of $T$ with respect to the basis $\mathcal{B}_V$ has the block form
\begin{equation}
  \label{eq:bcm}
\begin{bmatrix}
\bf{0} & \bf{0} & \dots & \bf{0} & -C_0 \\
I & \bf{0} & \dots & \bf{0} & -C_1 \\
\bf{0} & I & \dots & \bf{0} & -C_2 \\
\vdots & \vdots & \ddots & \vdots & \vdots \\
\bf{0} & \bf{0} & \dots & I & -C_{d-1}
\end{bmatrix},
\end{equation}
for some matrices $C_0, C_1, \ldots, C_{d-1} \in \M_m(\Fq)$.
Conversely, if $\{ \alpha_1, \ldots, \alpha_{md} \}$ is an ordered basis for $V$ with respect to which the matrix of $T$ is in the above block form, then span $\{ \alpha_1, \ldots, \alpha_{m} \}$ forms a $T$-splitting subspace for $V$.
Since there are $\sigma(m,d;\I)$ splitting subspaces of dimension $m$ and each such subspace has $\gamma_q(m)$ bases, it follows that $V$ has $\sigma(m,d;\I) \cdot \gamma_q(m)$ bases with respect to which the matrix of $T$ has the above block form. Different bases for $V$ may yield the same matrix for $T$. If $A$ denotes the matrix of $T$ with respect to the basis $\mathcal{B}_V$, then the number of different bases $\mathcal{B}$ for $V$ such that the matrix of $T$ with respect to $\mathcal{B}$ is $A$ is precisely
\begin{align*}
|\{ P \in \GL_{md}(\Fq) : P^{-1}AP=A \}|=c(\I).
\end{align*}
By Lemma \ref{lem:bcm}, $\mu_q(m,m,d;\I)$ is equal to the number of matrices of the block form~\eqref{eq:bcm} whose invariant factors are $\I$. Thus
$$
\mu_q(m,m,d;\I)=\frac{\sigma(m,d;\I) \cdot \gamma_q(m)}{c(\I)},
$$
which proves the theorem.
\end{proof}

\begin{defn}
Let $P \in\M_q(n,k,d)$ and suppose $1 \leq i \leq k$. The $i^{\text{th}}$ determinantal divisor of $P$, denoted $\delta_i(P)$, is the greatest common divisor of all $i \times i$ minors of $P$. 
\end{defn}
The invariant factors of $P$ can be recovered from the determinantal divisors; if $P \in \M_q(n,k,d;(p_1,\ldots,p_k))$, then \cite[p. 260]{MR0276251}
$$
p_i = \frac{\delta_i(P)}{\delta_{i-1}(P)} \quad (1 \leq i \leq k),
$$
where $\delta_0(P)=1$. In what follows, $\M_q(n,k,d;\I)$ denotes all elements $P\in \M_q(n,k,d)$ which have invariant factors $\I$. By definition, we have $\mu_q(n,k,d;\I)=|M_q(n,k,d;\I)|$.
\begin{remark}
  \label{rem:maxdeg}
If $P\in \M_q(n,k,d;\I)$, then
$$
\deg \I=\deg\prod_{i=1}^k p_i =\deg\delta_k(P)\leq kd,
$$
since the maximum possible degree of a minor of $P$ is precisely $kd$. In view of this degree constraint above we will implicitly assume that $\deg\I\leq kd$ whenever $\mu_q(n,k,d;\I)$ is considered.
\end{remark}
The following reduction lemma will prove very useful.
\begin{lemma}
\label{star}
We have
$$
\mu_q(n,k,d;(p_1,\ldots,p_k))=\mu_q(n,k,d-d_1;(\tilde{p}_1,\ldots,\tilde{p}_k)),
$$
where $d_1=\deg p_1$ and $\tilde{p}_i=p_i/p_1$ for $1 \leq i \leq k$.
\end{lemma}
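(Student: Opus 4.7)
The plan is to exhibit an explicit bijection
$$
\M_q(n,k,d;(p_1,\ldots,p_k)) \longleftrightarrow \M_q(n,k,d-d_1;(\tilde p_1,\ldots,\tilde p_k)),
$$
given by $P \mapsto \tilde P := P/p_1$, with inverse $\tilde P \mapsto p_1\tilde P$. Division here means entrywise division in $\Fq[x]$, and the first thing to check is that this division is in fact exact: since $p_1$ equals the first determinantal divisor $\delta_1(P)$, and $\delta_1(P)$ by definition is the gcd of the entries of $P$, the polynomial $p_1$ divides every entry of $P$.

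Next, I would check that the map lands in the correct set. Writing $P = x^d I + x^{d-1}C_{d-1}+\cdots+C_0$, the diagonal entries in rows $1,\ldots,k$ are monic of degree exactly $d$, while every other entry has degree at most $d-1$. After dividing by the monic polynomial $p_1$ of degree $d_1$, the diagonal entries become monic of degree $d-d_1$ and the remaining entries have degree at most $d-1-d_1$, which is strictly less than $d-d_1$. Hence $\tilde P$ has precisely the shape $x^{d-d_1}I + x^{d-d_1-1}C'_{d-d_1-1}+\cdots+C'_0$ required to lie in $\M_q(n,k,d-d_1)$.

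For the invariant factors, every $i\times i$ minor of $P$ equals $p_1^i$ times the corresponding minor of $\tilde P$, so $\delta_i(P) = p_1^i\,\delta_i(\tilde P)$ for each $i$. Therefore the $i$-th invariant factor of $\tilde P$ equals
$$
\frac{\delta_i(\tilde P)}{\delta_{i-1}(\tilde P)} = \frac{\delta_i(P)/p_1^i}{\delta_{i-1}(P)/p_1^{i-1}} = \frac{p_i}{p_1} = \tilde p_i,
$$
as desired. The reverse map $\tilde P \mapsto p_1 \tilde P$ is evidently a two-sided inverse, and the same computations (run in the other direction) show that it takes $\M_q(n,k,d-d_1;(\tilde p_1,\ldots,\tilde p_k))$ back into $\M_q(n,k,d;(p_1,\ldots,p_k))$, using that $p_1$ divides $p_i$ for each $i$ by the invariant-factor divisibility chain.

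There is no real obstacle here beyond the initial observation that $p_1 = \delta_1(P)$ is the gcd of the entries of $P$; once that is noted, the verification of the degree/monicity bounds and the invariant-factor computation are routine. The only place to be careful is that the degree count $\deg \tilde{\I} = \deg \I - kd_1 \le k(d-d_1)$ is consistent with Remark~\ref{rem:maxdeg}, which follows directly from $\deg\I \le kd$.
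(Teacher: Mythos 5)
Your proof is correct and follows essentially the same route as the paper: divide by $p_1=\delta_1(P)$ to get a bijection $P\mapsto P/p_1$ onto $\M_q(n,k,d-d_1;\widetilde{\I})$, with the invariant factors tracked via $\delta_i(P)=p_1^i\,\delta_i(P/p_1)$. Your additional checks (monic degree shape of the quotient and well-definedness of the inverse) are just the details the paper leaves implicit.
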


\begin{proof}
Let $\I=(p_1,\ldots,p_k)$ and suppose $P \in\M_q(n,k,d;\I)$. Since $\delta_1(P)=p_1$, it follows that $P=p_1 \cdot Q$ for some  $Q \in \M_q(n,k,d-d_1)$. It is easily seen that $\delta_i(Q)=\delta_i(P)/p_1^i$ for $1 \leq i \leq k$. Therefore the invariant factors for $Q$ are
$$
\left(\frac{p_1}{p_1},\frac{p_2}{p_1},\ldots,\frac{p_k}{p_1}\right)=\left(\tilde{p}_1,\ldots,\tilde{p}_k \right)=\widetilde{\I}.
$$
It follows that the map $\M_q(n,k,d;\I)\to \M_q(n,k,d-d_1;\widetilde{\I})$ defined by $P\mapsto P/p_1$ is a bijection.
\end{proof}

\begin{corollary}
\label{cor:ggg}  
Let $g$ be a monic polynomial of degree $d$ over $\Fq$ and  $\I=(g,\ldots,g)$ be a $k$-tuple of invariant factors. Then
$$
\mu_q(n,k,d;\I)=1.
$$
\end{corollary}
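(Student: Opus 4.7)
The plan is to apply the reduction lemma (Lemma \ref{star}) directly. For the tuple $\I=(g,\ldots,g)$ we have $p_1=g$ with $\deg p_1=d$ and $\tilde p_i=p_i/p_1=1$ for every $i$, so Lemma \ref{star} yields
$$
\mu_q(n,k,d;\I) \;=\; \mu_q(n,k,0;(1,\ldots,1))
$$
in one step. So everything reduces to understanding the right-hand side.

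Next I would identify $\M_q(n,k,0)$ as a singleton straight from the definition. When $d=0$ the expression $x^dI+x^{d-1}C_{d-1}+\cdots+C_0$ has no coefficients $C_i$ to vary and collapses to the single matrix $I$. Since the $n\times k$ identity matrix has all determinantal divisors equal to $1$ and hence invariant factors $(1,\ldots,1)$, the count $\mu_q(n,k,0;(1,\ldots,1))$ equals $1$, which finishes the argument.

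There is essentially no obstacle; the only mild subtlety worth checking is that Lemma \ref{star} still applies in the boundary case $d-d_1=0$. Inspecting its proof, when $d_1=d$ the quotient $Q=P/p_1$ is forced into $\M_q(n,k,0)=\{I\}$, and the bijection $P\mapsto P/p_1$ goes through without modification. If one prefers to sidestep the reduction lemma entirely, the same conclusion can be obtained directly: any $P\in\M_q(n,k,d;(g,\ldots,g))$ satisfies $\delta_1(P)=g$, so every entry of $P$ is divisible by $g$ and $P=gQ$ for a polynomial matrix $Q$; matching the leading term $x^dI$ of $P$ against the leading term $x^d$ of $g$ forces $Q=I$, so $P=gI$ is the unique element.
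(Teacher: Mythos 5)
Your proposal is correct and follows essentially the same route as the paper: reduce via Lemma \ref{star} to $\mu_q(n,k,0;(1,\ldots,1))$ and note that $\M_q(n,k,0)$ consists of the single matrix $I$, so the count is $1$. The extra check of the boundary case $d-d_1=0$ and the alternative direct argument are fine but add nothing beyond the paper's one-line proof.
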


\begin{proof}
By Lemma \ref{star}, $\mu_q(n,k,d;(g,\ldots,g))=\mu_q(n,k,0;(1,\ldots,1))$ and the corollary follows since $\mu_q(n,k,0;(1,\ldots,1))=1$ by definition.
\end{proof}

A precise expression for the cardinality of the centralizer in $\GL_n(\Fq)$ of a matrix $A \in \M_n(\Fq)$ is known. As we will require this expression in some calculations, we state it here. Suppose the similarity class of $A$ is given by $\{ (\phi_1,\lambda_1),\ldots,(\phi_t,\lambda_t)\}$ for some irreducible polynomials $\phi_i$ and partitions $\lambda_i(1\leq i\leq t)$. Let the corresponding invariant factors (i.e. those of $xI-A$) be $\I=(p_1,\ldots,p_{n})$. For any partition $\lambda$, let $\lambda'$ denote its conjugate partition and let $m_i(\lambda)=\lambda'_i-\lambda'_{i+1}$ denote the number of parts of $\lambda$ of size $i$. Denote by $\langle \lambda, \lambda \rangle$ the sum of squares of the parts of $\lambda$.
For an indeterminate $u$ and a non-negative integer $r$, define
\begin{align*}
\left(u\right)_r :=\prod_{i=1}^{r}(1-u^i).
\end{align*}  
For any monic irreducible polynomial $\phi$ of degree $d$ and integer partition $\lambda$, let
$$
c_d(\lambda) = q^{d \langle \lambda', \lambda' \rangle}\prod_{i \geq 1} \left({q^{-d}}\right)_{m_i(\lambda)}.
$$
If $d_i=\deg \phi_i$, then the order of the centralizer of $A$ is given by \cite[p. 55]{Fulman2002}
\begin{align}
\label{c(I)}
c(\I)=\prod_{i=1}^t c_{d_i}(\lambda_i).
\end{align}

\begin{remark}
  \label{rem:ctau}
Note that the expression for $c(\I)$ above involves only the degrees of the polynomials $\phi_i$ and not the polynomials themselves. Thus given any similarity class type $\tau$, the corresponding centralizer size $c(\tau)$ is given by the product in \eqref{c(I)}. 
\end{remark}

 Corollary \ref{cor:ggg} and Theorem \ref{thm1} imply a formula for $\sigma(m,d;(p_1,\ldots,p_m))$ in the case where $\deg p_1=d$. In this case we necessarily have $p_i=p_1$ for each $1\leq i\leq m$.
\begin{theorem}
  \label{lastmsame}
  Let $g$ be a monic polynomial of degree $d$ over $\Fq$, and suppose $g=\phi_1^{e_1}\cdots \phi_t^{e_t}$ for distinct irreducible polynomials $\phi_i$ with $\deg \phi_i=d_i$ $(1 \leq i \leq t)$. If $\I$ denotes the $m$-tuple $(g,\ldots,g)$, then
  $$
\sigma(m,d;\I)=\frac{q^{m^2 d}}{\gamma_q(m)} \prod_{i=1}^{t} \prod_{j=1}^m\left(1-q^{-jd_i}\right).
  $$
\end{theorem}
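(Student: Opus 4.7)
The plan is to apply Theorem \ref{thm1} directly: since $\I=(g,\ldots,g)$ is an $m$-tuple with $\deg\I=md$, we have
$$
\sigma(m,d;\I)=\frac{c(\I)}{\gamma_q(m)}\,\mu_q(m,m,d;\I).
$$
By Corollary \ref{cor:ggg}, the matrix enumeration factor collapses to $\mu_q(m,m,d;(g,\ldots,g))=1$. So the entire problem reduces to computing the centralizer order $c(\I)$ attached to the padded $md$-tuple $(1,\ldots,1,g,\ldots,g)$, following the convention introduced right before Theorem \ref{thm1}.

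Next I would identify the similarity class type of this padded tuple. Factoring $g=\phi_1^{e_1}\cdots\phi_t^{e_t}$ with $\deg\phi_i=d_i$, the invariant factor list $(1,\ldots,1,g,\ldots,g)$ assigns to each $\phi_i$ the partition $\lambda_i=(e_i,e_i,\ldots,e_i)$ with exactly $m$ parts (the $m(d-1)$ trivial invariant factors contribute nothing). I would then plug this rectangular partition into formula \eqref{c(I)}. The conjugate partition is $\lambda_i'=(m,m,\ldots,m)$ with $e_i$ parts, so $\langle\lambda_i',\lambda_i'\rangle=e_im^2$. The part-multiplicities satisfy $m_j(\lambda_i)=0$ for $j\neq e_i$ and $m_{e_i}(\lambda_i)=m$, so
$$
c_{d_i}(\lambda_i)=q^{d_ie_im^2}\prod_{j\geq 1}\left(q^{-d_i}\right)_{m_j(\lambda_i)}=q^{d_ie_im^2}\prod_{j=1}^m(1-q^{-jd_i}).
$$

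Taking the product over $i$ and using $\sum_{i=1}^t d_ie_i=\deg g=d$, this yields
$$
c(\I)=q^{m^2 d}\prod_{i=1}^t\prod_{j=1}^m(1-q^{-jd_i}).
$$
Substituting back into the formula from Theorem \ref{thm1} gives exactly the claimed expression for $\sigma(m,d;\I)$. There is no real obstacle here; the only point requiring care is the bookkeeping in going from the $m$-tuple $(g,\ldots,g)$ to the padded $md$-tuple and correctly reading off the rectangular partitions $\lambda_i=(e_i^m)$ that describe the similarity class type, since it is this similarity class (not the reduced tuple) that governs the centralizer size.
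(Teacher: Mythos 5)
Your proposal is correct and follows essentially the same route as the paper: apply Theorem \ref{thm1}, use Corollary \ref{cor:ggg} to get $\mu_q(m,m,d;(g,\ldots,g))=1$, read off the rectangular partitions $\lambda_i=(e_i,\ldots,e_i)$ with $m$ parts, and evaluate $c(\I)$ via \eqref{c(I)} using $\sum_i d_ie_i=d$. Your computation $c_{d_i}(\lambda_i)=q^{d_ie_im^2}\prod_{j=1}^m\left(1-q^{-jd_i}\right)$ matches the paper's step exactly, so there is nothing to add.
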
  
\begin{proof}
 By Corollary \ref{cor:ggg}, we have $\mu_q(m,m,d,(g,\ldots,g))=1$. The conjugacy class data corresponding to $\I$ is $\{(\phi_i,\lambda_i)\}_{1\leq i\leq t}$ where $\lambda_i$ is the integer partition with $m$ equal parts $e_i$ for each $1\leq i\leq t$. If $d_i=\deg \phi_i$, then by Theorem \ref{thm1} and \eqref{c(I)},
we have
\begin{align*}
  \sigma(m,d;\I)=\frac{c(\I)}{\gamma_q(m)} &=\frac{\prod_{i=1}^{t}c_{d_i}(\lambda_i)}{\gamma_q(m)}\\
  &=\frac{\prod_{i=1}^{t}q^{d_i m^2 e_i} \left({q^{-d_i}}\right)_m}{\gamma_q(m)}\\
  &=\frac{q^{m^2 \sum  d_ie_i}}{\gamma_q(m)}\prod_{i=1}^{t}\left({q^{-d_i}}\right)_m,
\end{align*}
and the theorem follows since $\sum d_ie_i=\deg g=d.$
\end{proof}

We may recast Theorem \ref{lastmsame} in terms of similarity class type.
\begin{theorem}
Let $\tau=\{(d_1,\lambda_1),\ldots,(d_t,\lambda_t)\}$ be a similarity class type of size $md$. If $\sum_{i=1}^t d_i \lambda_{i,m} =d$, then
$$
\sigma(m,d;\tau)=\frac{q^{m^2 d}}{\gamma_q(m)} \prod_{i=1}^{t} \prod_{j=1}^m\left(1-q^{-jd_i}\right).
$$
\end{theorem}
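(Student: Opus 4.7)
The plan is to show that this is essentially a direct reformulation of Theorem \ref{lastmsame}, by demonstrating that the hypothesis $\sum_{i=1}^t d_i\lambda_{i,m}=d$ together with the size constraint $|\tau|=md$ forces the similarity class type to be precisely of the form that arises from an $m$-tuple of invariant factors $(g,\ldots,g)$ with $\deg g=d$.

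First I would fix a linear operator $T$ of type $\tau$ on an $md$-dimensional $\Fq$-vector space $V$, and recall from the corollary to Proposition \ref{existence} that $\sigma(m,d;\tau)>0$ forces each partition $\lambda_i$ to have at most $m$ parts, so I may pad and assume $\lambda_{i,1}\geq \lambda_{i,2}\geq \cdots \geq \lambda_{i,m}\geq 0$. Writing the invariant factors of $T$ (after dropping trivial ones) as an $m$-tuple $(p_1,\ldots,p_m)$ with $p_j=\prod_{i=1}^t \phi_i^{\lambda_{i,m-j+1}}$ for suitable distinct monic irreducibles $\phi_i$ with $\deg\phi_i=d_i$, we have $\deg p_1=\sum_{i=1}^t d_i\lambda_{i,m}$.

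Next I would use the hypothesis to pin down the type. Since $\deg p_1\leq \deg p_2\leq \cdots \leq \deg p_m$ (because $p_1\mid p_2\mid\cdots\mid p_m$) and $\sum_{j=1}^m \deg p_j=md$, the assumption $\deg p_1=d$ forces $\deg p_j=d$ for every $j$, hence $p_1=p_2=\cdots=p_m=:g$, a polynomial of degree $d$. Equivalently, each partition satisfies $\lambda_{i,1}=\lambda_{i,2}=\cdots=\lambda_{i,m}=:e_i$, and $g=\prod_{i=1}^t \phi_i^{e_i}$.

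At this point the $m$-tuple of invariant factors of $T$ is exactly $\I=(g,\ldots,g)$, and I would invoke Theorem \ref{lastmsame} verbatim to obtain
$$
\sigma(m,d;\tau)=\sigma(m,d;T)=\sigma(m,d;\I)=\frac{q^{m^2 d}}{\gamma_q(m)}\prod_{i=1}^{t}\prod_{j=1}^m\bigl(1-q^{-jd_i}\bigr),
$$
the first equality being the definition of $\sigma(m,d;\tau)$ and the last being Theorem \ref{lastmsame}. The only delicate point is the bookkeeping that identifies the partition-based data $\{(d_i,\lambda_i)\}$ with the invariant-factor data $\I$; this is routine but I would write it out carefully, using Remark \ref{rem:ctau} to stress that the right-hand side genuinely depends only on the degrees $d_i$ and the common multiplicities $e_i=\lambda_{i,m}$, not on the choice of irreducibles $\phi_i$. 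I do not anticipate a real obstacle here, as the content is entirely a translation between two equivalent indexings of the same combinatorial datum.
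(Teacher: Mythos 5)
Your proposal is correct and is exactly the argument the paper intends: the statement is presented there as a recasting of Theorem \ref{lastmsame}, and your translation (hypothesis forces $p_1=\cdots=p_m=g$ with $\deg g=d$, then invoke Theorem \ref{lastmsame} together with the fact that $\sigma$ and $c$ depend only on the type) is the implicit proof. One small repair: do not justify padding each $\lambda_i$ to at most $m$ parts by appealing to $\sigma(m,d;\tau)>0$, since positivity is not a hypothesis and is in effect part of what the formula asserts; instead observe that the hypothesis itself forces it, because
$md=\sum_{i=1}^t d_i\,|\lambda_i|\;\geq\;\sum_{i=1}^t d_i\sum_{j=1}^m\lambda_{i,j}\;\geq\;m\sum_{i=1}^t d_i\lambda_{i,m}=md$,
so equality holds throughout, each $\lambda_i$ has at most $m$ parts and all its parts equal $\lambda_{i,m}$. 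With that substitution your identification of the type data with the invariant-factor tuple $(g,\ldots,g)$, $g=\prod_i\phi_i^{\lambda_{i,m}}$, and the appeal to Remark \ref{rem:ctau} go through verbatim.
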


\section{Splitting Subspaces and Centralizers} 
\label{centralizer}
In this section we extend the definition of $c(\I)$ to include $k$-tuples of invariant factors $\I$ for which $\deg \I$ is not necessarily equal to $k$ in a natural way. If $\I=(p_1,\ldots,p_k)$ and $\deg \I=\delta>k$, then we set $c(\I)=c(\I')$ where $\I'=(1,\ldots,1,p_1,\ldots,p_k)$ denotes the $\delta$-tuple obtained by padding $\delta-k$ ones to $\I$. On the other hand, if $\delta < k$, then we set $c(\I)=c(\I')$ where $\I'$ is the $\delta$-tuple $(p_{k-\delta+1},\ldots,p_k)$. Denote by $\mathbb{I}_q(n,k,d)$ the set of all possible $k$-tuples of invariant factors that arise as the invariant factors of some element $P\in \M_q(n,k,d)$.

For $\I=(p_1,\ldots,p_k)$, the definition of $\mu_q(n,k,d;\I)$ states that
$$
\mu_q(n,k,1;\I)=|\left\{A \in \M_{n,k}(\Fq):xI-A\sim\diag_{n,k}(p_1,\ldots,p_k)\right\}|.
$$
A precise formula for $\mu_q(n,k,1;\I)$ was originally given in \cite[Thm. 3.8]{RAM2017146}. 

\begin{theorem}
\label{thmLAA}
Let $\I\in \mathbb{I}_q(n,k,1)$ and suppose $\deg \I=\delta$. Then
$$
\mu_q(n,k,1;\I)={k \brack \delta}_q \frac{\gamma_q(\delta)}{c(\I)} \prod_{i=\delta+1}^{k}(q^n-q^i),
$$
where ${\cdot \brack \cdot}_q$ denotes a $q$-binomial coefficient.

\end{theorem}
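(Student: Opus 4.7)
The plan is to partition the matrices counted by $\mu_q(n,k,1;\I)$ according to a canonical subspace $U_A\subseteq\Fq^k$ attached to each $A$, and then to sum the fibers over $U$. Regarding $\Fq^k\subseteq\Fq^n$ as the span of the first $k$ standard basis vectors and writing $A=\binom{A_1}{A_2}$ with $A_1\in M_k(\Fq)$ and $A_2\in M_{n-k,k}(\Fq)$, I would define $U_A$ to be the largest subspace $U\subseteq\Fq^k$ satisfying $A_1 U\subseteq U$ and $A_2 U=0$; this maximum exists because the two conditions are preserved under sums. The goal is to show that $\dim U_A=\delta$ whenever $xI-A\sim\diag_{n,k}(\I)$, so that each such matrix is indexed by exactly one subspace $U=U_A$ of dimension $\delta$.

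To analyze the block structure, I would fix a complement $U'$ of $U_A$ in $\Fq^k$ and a complement $V'$ of $\Fq^k$ in $\Fq^n$, giving
\begin{equation*}
A=\begin{pmatrix}A_{11}&A_{12}\\0&A_{22}\\0&A_{32}\end{pmatrix},\qquad xI-A=\begin{pmatrix}xI_\delta-A_{11}&-A_{12}\\0&xI_{k-\delta}-A_{22}\\0&-A_{32}\end{pmatrix}.
\end{equation*}
The lower-right block $R=\binom{xI-A_{22}}{-A_{32}}$ is an $(n-\delta)\times(k-\delta)$ polynomial matrix of the shape considered in \cite{MR3471061}. Maximality of $U_A$ is equivalent to the pair $(A_{22},A_{32})$ being observable, which by the Hautus/PBH test is equivalent to $R$ having Smith form $(1,\ldots,1)$. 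Once $R$ is unimodular, the block $-A_{12}$ can be cleared by row operations, and $xI-A$ is seen to have Smith form $\diag(1,\ldots,1,q_1,\ldots,q_\delta)$, where $(q_1,\ldots,q_\delta)$ are the invariant factors of $A_{11}\in\mathrm{End}(U_A)$. This forces $\dim U_A=\delta$ and identifies $(q_1,\ldots,q_\delta)$ with $\I'=(p_{k-\delta+1},\ldots,p_k)$.

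Once this is established, the count is routine. There are ${k\brack\delta}_q$ choices of $U\subseteq\Fq^k$ of dimension $\delta$. For each $U$, the matrices $A$ with $U_A=U$ and $xI-A\sim\diag_{n,k}(\I)$ are parameterized (after choosing a complement) by: $A_{11}\in\mathrm{End}(U)$ in the conjugacy class with invariant factors $\I'$, contributing $\gamma_q(\delta)/c(\I)$; $A_{12}\in M_{\delta,k-\delta}(\Fq)$ arbitrary, contributing $q^{\delta(k-\delta)}$; and $(A_{22},A_{32})$, equivalent to $B\in M_{n-\delta,k-\delta}(\Fq)$ with $xI-B$ unimodular, contributing $\prod_{i=1}^{k-\delta}(q^{n-\delta}-q^i)$ by \cite[Thm.~1]{MR3471061}. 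Multiplying and using the identity $q^{\delta(k-\delta)}\prod_{i=1}^{k-\delta}(q^{n-\delta}-q^i)=\prod_{i=\delta+1}^{k}(q^n-q^i)$ yields the stated formula.

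The main obstacle is the content of the second paragraph: the triple equivalence among maximality of $U_A$, observability of $(A_{22},A_{32})$, and unimodularity of $R$, together with the Smith form reduction identifying $(q_1,\ldots,q_\delta)$ with $\I'$. This is a polynomial-matrix incarnation of the PBH observability criterion, and it must be set up carefully so that every matrix $A$ with $xI-A\sim\diag_{n,k}(\I)$ corresponds to exactly one subspace $U=U_A$. Once that canonical association is in place, the sum over $U$ delivers the formula, with the case $\delta=0$ specializing to the result of \cite{MR3471061}.
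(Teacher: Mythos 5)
The paper does not actually prove Theorem \ref{thmLAA}: it is quoted from \cite[Thm.~3.8]{RAM2017146}, so there is no internal proof to compare against. Judged on its own, your argument is correct and gives a legitimate self-contained proof, and it runs along the natural lines of the cited source (decompose according to the largest invariant ``core'' inside $\Fq^k$, count the complementary observable/zero-kernel part via the Helmke--Jordan--Lieb count of unimodular $xI-B$). The fibration is sound: for any $A$ the subspace $U_A$ exists, the adapted-basis change fixes the rectangular identity block (because the basis of $\Fq^n$ extends that of $\Fq^k$), so Smith forms are preserved; maximality of $U_A$ is equivalent to observability of $(A_{22},A_{32})$, which over $\Fq$ is equivalent (via roots in $\overline{\Fq}$ of a common irreducible divisor of the maximal minors) to unimodularity of $R=\bigl(\begin{smallmatrix} xI-A_{22}\\ -A_{32}\end{smallmatrix}\bigr)$; and once $R\sim\bigl(\begin{smallmatrix} I\\ 0\end{smallmatrix}\bigr)$ the block $-A_{12}$ is cleared by row operations, so the invariant factors of $xI-A$ are $(1,\ldots,1,q_1,\ldots,q_{\delta'})$ with $\delta'=\dim U_A$ and $q_i$ the invariant factors of $A_{11}$; comparing degrees forces $\delta'=\delta$ and identifies the $q_i$ with $(p_{k-\delta+1},\ldots,p_k)$. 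The fiber count $\bigl(\gamma_q(\delta)/c(\I)\bigr)\cdot q^{\delta(k-\delta)}\cdot\prod_{i=1}^{k-\delta}(q^{n-\delta}-q^i)$ uses only Hall's conjugacy-class count (the $k=n$ case, which is classical and not circular) and \cite[Thm.~1]{MR3471061}, and your identity $q^{\delta(k-\delta)}\prod_{i=1}^{k-\delta}(q^{n-\delta}-q^i)=\prod_{i=\delta+1}^{k}(q^n-q^i)$ is correct, as are the boundary cases $\delta=k$ and $k=n$ (where necessarily $\delta=n$). The only work left is exactly what you flag: writing out the PBH equivalence over a non-algebraically-closed field and checking that the parameterization over a fixed complement is a genuine bijection compatible with the standard-basis Smith form; both are routine as sketched above.
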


The classical result of Philip Hall \cite[Thm. 1.10.4]{MR2868112} on conjugacy class size in $\M_n(\Fq)$ can be recovered from the above theorem by setting $k=n$; in this case we necessarily have $\delta=n$ and it follows that $\mu_q(n,n,1;\I)=\gamma_q(n)/c(\I).$

A polynomial matrix $P \in\M_q(n,k,d)$ is said to be {\it unimodular} if the greatest common divisor of all $k \times k$ minors of $P$ is $1$, in other words, if and only if $\delta_k(P)=1$. This corresponds to the case where all invariant factors of $P$ are equal to 1. For $k<n$, a formula for the number of unimodular matrices in $\M_q(n,k,1)$ was given by Helmke, Jordan, and Lieb \cite[Thm. 1]{MR3471061}; this formula may be recovered from Theorem \ref{thmLAA} by setting $\I=(1,\ldots,1)$.

Theorem \ref{thmLAA} can be used to derive an expression for $\sigma(m,d;\I)$ when $\I=(p_1,\ldots,p_m)$ with $\deg p_1 = d-1$ in terms of centralizers in general linear groups.

\begin{corollary}
\label{corLAA}
Suppose $\I=(p_1,\ldots,p_k)\in \mathbb{I}_q(n,k,d)$ and $\deg p_1 = d-1$. Then 
$$
\mu_q(n,k,d;\I)={k \brack \delta}_q \frac{\gamma_q(\delta)}{c(\widetilde{\I})} \prod_{i=\delta+1}^k (q^n-q^i),
$$
where $\widetilde{\I}=(\tilde{p}_1,\ldots,\tilde{p}_k)$ with $\tilde{p}_i=p_i/p_1$ and $\delta=\deg \widetilde{\I}$. 

\end{corollary}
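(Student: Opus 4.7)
The plan is to derive Corollary \ref{corLAA} as a direct two-step consequence of the two earlier results already at our disposal: the reduction Lemma \ref{star} to peel off the common factor $p_1$, and Theorem \ref{thmLAA} to evaluate what remains.

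First I would invoke Lemma \ref{star} with $d_1 = \deg p_1$. Since we are told $\deg p_1 = d-1$, the resulting shift in the degree parameter is $d - d_1 = 1$, so
$$
\mu_q(n,k,d;\I) \;=\; \mu_q\bigl(n,k,d-d_1;\widetilde{\I}\bigr) \;=\; \mu_q(n,k,1;\widetilde{\I}),
$$
where $\widetilde{\I} = (\tilde p_1,\ldots,\tilde p_k)$ with $\tilde p_i = p_i/p_1$. The bijection $P \mapsto P/p_1$ from the proof of Lemma \ref{star} also certifies that $\widetilde{\I}$ is a legitimate tuple of invariant factors in $\mathbb{I}_q(n,k,1)$, which is the hypothesis needed for Theorem \ref{thmLAA}.

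Second, I would apply Theorem \ref{thmLAA} directly to $\widetilde{\I}$ with $\delta = \deg \widetilde{\I}$. This yields
$$
\mu_q(n,k,1;\widetilde{\I}) \;=\; {k \brack \delta}_q \,\frac{\gamma_q(\delta)}{c(\widetilde{\I})} \prod_{i=\delta+1}^{k}(q^n - q^i),
$$
and chaining the two equalities gives the formula claimed in the corollary.

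There is really no hard part here; the content of the corollary lies entirely in the earlier results. The only small point worth emphasizing in the write-up is the degree bookkeeping: the reduction converts a $\mu_q(n,k,d;\,\cdot\,)$ problem (with $\deg p_1 = d-1$) into the $d=1$ situation exactly because $p_1$ uses up $d-1$ of the $d$ available degrees, so after dividing out by $p_1$ the entries of $\widetilde{\I}$ have degrees bounded by $1$. This justifies that the $\delta$ appearing in the two applications is the same, and that the degree constraint $\delta \le k$ noted in Remark \ref{rem:maxdeg} holds for $\widetilde{\I}$ in $\M_q(n,k,1)$.
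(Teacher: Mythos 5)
Your argument is correct and is essentially identical to the paper's own proof: apply Lemma \ref{star} to reduce $\mu_q(n,k,d;\I)$ to $\mu_q(n,k,1;\widetilde{\I})$, then evaluate via Theorem \ref{thmLAA}. The extra degree bookkeeping you include is accurate but not needed beyond what the paper states.
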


\begin{proof}
By Lemma \ref{star}, we have $\mu_q(n,k,d;\I)=\mu_q(n,k,1;\widetilde{\I})$. The corollary now follows from Theorem \ref{thmLAA}.
\end{proof}

\begin{corollary}
\label{center}
Suppose $\I=(p_1,\ldots,p_m)$ with $\deg \I=md$ and $\deg p_1=d-1$. Then 
$$
\sigma(m,d;\I)=\frac{c(\I)}{c(\widetilde{\I})},
$$
where $\widetilde{\I}=(\tilde{p}_1,\ldots,\tilde{p}_m)$ and $\tilde{p}_i=p_i/p_1$ for $1 \leq i \leq m$. 
\end{corollary}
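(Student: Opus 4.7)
The plan is to combine the two main results already in hand: Theorem \ref{thm1}, which expresses $\sigma(m,d;\I)$ in terms of the polynomial-matrix count $\mu_q(m,m,d;\I)$, and Corollary \ref{corLAA}, which gives a closed form for $\mu_q(n,k,d;\I)$ under the hypothesis $\deg p_1 = d-1$. Specializing Corollary \ref{corLAA} to $n=k=m$ and feeding the result into Theorem \ref{thm1} should produce the desired identity after cancellation.

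First, I would apply Theorem \ref{thm1} to write
$$
\sigma(m,d;\I) = \frac{c(\I)}{\gamma_q(m)}\,\mu_q(m,m,d;\I),
$$
which reduces the task to evaluating $\mu_q(m,m,d;\I)$. Next I would invoke Corollary \ref{corLAA} with $n=k=m$, yielding
$$
\mu_q(m,m,d;\I) = {m \brack \delta}_q \frac{\gamma_q(\delta)}{c(\widetilde{\I})} \prod_{i=\delta+1}^{m}(q^m-q^i),
$$
where $\delta = \deg \widetilde{\I}$.

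The crux of the argument is the observation that the hypotheses force $\delta = m$ exactly. Indeed, since $\deg p_1 = d-1$ and the $p_i$ form a divisibility chain, each $\tilde{p}_i = p_i/p_1$ is a well-defined monic polynomial, so $\deg \widetilde{\I} = \deg \I - m\deg p_1 = md - m(d-1) = m$. With $\delta = m$, the $q$-binomial ${m \brack m}_q$ equals $1$, the product $\prod_{i=m+1}^{m}(q^m-q^i)$ is empty and equals $1$, and $\gamma_q(\delta) = \gamma_q(m)$. This gives the clean evaluation
$$
\mu_q(m,m,d;\I) = \frac{\gamma_q(m)}{c(\widetilde{\I})}.
$$

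Finally, substituting this into the expression for $\sigma(m,d;\I)$ from Theorem \ref{thm1}, the factors of $\gamma_q(m)$ cancel and one obtains $\sigma(m,d;\I) = c(\I)/c(\widetilde{\I})$. There is essentially no obstacle here beyond verifying the degree bookkeeping that produces $\delta = m$; the real content has already been packaged into Theorem \ref{thm1} and Corollary \ref{corLAA}, and this corollary is the graceful specialization at the boundary $\delta = k$ where the unimodular-completion factor $\prod_{i=\delta+1}^{k}(q^n-q^i)$ disappears.
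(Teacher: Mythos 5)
Your proposal is correct and follows exactly the paper's own route: apply Theorem \ref{thm1}, then Corollary \ref{corLAA} with $n=k=m$, noting $\deg\widetilde{\I}=md-m(d-1)=m$ so that the $q$-binomial and the product become trivial and $\mu_q(m,m,d;\I)=\gamma_q(m)/c(\widetilde{\I})$. The degree bookkeeping you spell out is precisely the one-line observation the paper makes before the same cancellation.
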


\begin{proof}
  Note that $\deg \widetilde{\I} = \deg \I -m\cdot \deg p_1 = m$. By Theorem \ref{thm1}, we have
  \begin{align*}
    \sigma(m,d;\I)&=\frac{c(\I)}{\gamma_q(m)}\mu_q(m,m,d;\I)\\
    &=\frac{c(\I)}{\gamma_q(m)}\frac{\gamma_q(m)}{c(\widetilde{\I})},
  \end{align*}
where the last step follows from Corollary \ref{corLAA} by setting $n=m$ and $k=m$.
\end{proof}

By considering types we obtain the following generalization of the above corollary. Recall the definition of $c(\tau)$ from Remark \ref{rem:ctau}. 

\begin{corollary}
\label{ztau}
Let $\tau=\{(d_1,\lambda_1),\ldots,(d_t,\lambda_t)\}$ be a similarity class type of size $md$. If $\sum_{i=1}^t d_i \lambda_{i,m}=d-1$, then
$$
\sigma(m,d;\tau)=\frac{c(\tau)}{c(\widetilde{\tau})},
$$
where $\widetilde{\tau}=\{(d_1,\mu_1),\ldots,(d_t,\mu_t)\}$ and $\mu_i$ is given by $\mu_{i,j}=\lambda_{i,j}-\lambda_{i,m}$ for $1\leq i\leq t$ and $1\leq j\leq m$.
\end{corollary}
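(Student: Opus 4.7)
The plan is to reduce Corollary \ref{ztau} to Corollary \ref{center} by exhibiting an explicit $m$-tuple of invariant factors $\I$ realising the type $\tau$ and then exploiting the fact, recorded in Remark \ref{rem:ctau}, that centralizer sizes depend only on the similarity class type and not on the specific irreducible polynomials appearing in it. Since $\sigma(m,d;\tau)$ depends only on the type, I am free to choose any convenient representative.

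First I would pick $t$ pairwise distinct monic irreducible polynomials $\phi_1,\ldots,\phi_t$ with $\deg \phi_i=d_i$, and form the $m$-tuple $\I=(p_1,\ldots,p_m)$ by setting
$$
p_j \;=\; \prod_{i=1}^{t}\phi_i^{\lambda_{i,m-j+1}} \qquad (1\le j\le m),
$$
with the convention $\lambda_{i,k}=0$ for $k>\ell_i$. Because each $\lambda_i$ has at most $m$ parts (the positivity criterion from Corollary \ref{im} and its partition reformulation) and is weakly decreasing, the divisibility $p_j\mid p_{j+1}$ holds, and, upon padding by $m(d-1)$ ones as in the convention introduced just before Theorem \ref{thm1}, the resulting operator has similarity class type $\tau$. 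Next I would check that the two hypotheses of Corollary \ref{center} are met: the total degree is $\deg \I=\sum_i d_i|\lambda_i|=md$, which equals the size of $\tau$; and $\deg p_1=\sum_i d_i\lambda_{i,m}=d-1$ by the hypothesis of the corollary. Corollary \ref{center} therefore applies and yields $\sigma(m,d;\I)=c(\I)/c(\widetilde{\I})$.

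It remains to match the two centralizer sizes with $c(\tau)$ and $c(\widetilde{\tau})$. Dividing through by $p_1$ gives $\tilde{p}_j=\prod_i \phi_i^{\lambda_{i,m-j+1}-\lambda_{i,m}}$, so the similarity class type of $\widetilde{\I}$ has the same irreducible polynomials $\phi_i$ paired with the partitions $\mu_i$ whose parts are $\mu_{i,j}=\lambda_{i,j}-\lambda_{i,m}$; trailing zeros in these partitions contribute trivially to the product in \eqref{c(I)}, so this is precisely the type $\widetilde{\tau}$ of the statement. By Remark \ref{rem:ctau} we then have $c(\I)=c(\tau)$ and $c(\widetilde{\I})=c(\widetilde{\tau})$, completing the proof. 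The only real obstacle I expect is bookkeeping the indexing conventions—aligning the decreasing order of the parts of $\lambda_i$ with the increasing order of divisibility of the $p_j$, and confirming that the possible zero parts in $\mu_i$ do not disturb the centralizer formula—but both are straightforward once written out.
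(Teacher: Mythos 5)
Your argument is correct and is exactly the route the paper intends (the paper states Corollary \ref{ztau} as an immediate consequence of Corollary \ref{center} "by considering types" without writing out the details): choose distinct irreducibles $\phi_i$ of degree $d_i$, realise $\tau$ by the $m$-tuple of invariant factors you construct, apply Corollary \ref{center}, and invoke type-invariance of $\sigma$ and of the centralizer size via Remark \ref{rem:ctau}. The indexing and zero-part bookkeeping you flag is handled correctly, so nothing is missing.
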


Corollary \ref{ztau} may be reformulated in a more explicit form as follows.
\begin{corollary}

Let $\tau=\{(d_1,\lambda_1),\ldots,(d_t,\lambda_t)\}$ be a similarity class type of size $md$. Suppose $\sum_{i=1}^t d_i \lambda_{i,m}=d-1$ and let $m_i$ denote the multiplicity of $\lambda_{i,m}$ as a part of $\lambda_i$ for $1\leq i\leq t$. Then
  \begin{align*}
\sigma(m,d;\tau)&=q^{m^2(d-1)}\prod_{i=1}^t \prod_{j=1}^{m_i}\left(1-{q^{-jd_i}}\right).
\end{align*}
\end{corollary}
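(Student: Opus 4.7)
The plan is to derive the formula directly from Corollary \ref{ztau}, which already tells us that $\sigma(m,d;\tau)=c(\tau)/c(\widetilde{\tau})$; so the task reduces to computing this ratio using the explicit formula \eqref{c(I)} for centralizer sizes in terms of conjugate partitions and part multiplicities. Since the centralizer is multiplicative over the data indexed by $i$, I will analyze each factor $c_{d_i}(\lambda_i)/c_{d_i}(\mu_i)$ separately, where $\mu_i$ is obtained from $\lambda_i$ by subtracting $\lambda_{i,m}$ from every part.

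First I would describe $\mu_i$ via conjugate partitions. Setting $k_i:=\lambda_{i,m}$, which is the smallest part of $\lambda_i$ (viewed as having $m$ parts, some possibly zero), the operation of subtracting $k_i$ from every part of $\lambda_i$ translates into deleting the first $k_i$ entries of the conjugate, that is, $\mu_i'{}_j=\lambda_i'{}_{j+k_i}$. This has two consequences I would record: (a) since all $m$ parts of $\lambda_i$ are $\geq k_i$, we have $\lambda_i'{}_j=m$ for $1\leq j\leq k_i$, so
$$
\langle\lambda_i',\lambda_i'\rangle-\langle\mu_i',\mu_i'\rangle=\sum_{j=1}^{k_i}(\lambda_i'{}_j)^2=k_i m^2=\lambda_{i,m}\,m^2;
$$
and (b) the multiplicities satisfy $m_j(\mu_i)=m_{j+k_i}(\lambda_i)$, while for $1\leq j<k_i$ the multiplicity $m_j(\lambda_i)=\lambda_i'{}_j-\lambda_i'{}_{j+1}$ vanishes because $\lambda_i'{}_j=\lambda_i'{}_{j+1}=m$. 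The only surviving new factor is at $j=k_i$, where $m_{k_i}(\lambda_i)=m-\lambda_i'{}_{k_i+1}=m_i$ (by definition of $m_i$).

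Combining these observations via \eqref{c(I)} gives
$$
\frac{c_{d_i}(\lambda_i)}{c_{d_i}(\mu_i)}=q^{d_i\lambda_{i,m}m^2}\cdot\frac{\prod_{j\geq 1}(q^{-d_i})_{m_j(\lambda_i)}}{\prod_{j\geq 1}(q^{-d_i})_{m_j(\mu_i)}}=q^{d_i\lambda_{i,m}m^2}\,(q^{-d_i})_{m_i}.
$$
Taking the product over $i=1,\ldots,t$ and invoking the hypothesis $\sum_i d_i\lambda_{i,m}=d-1$ collects the $q$-powers into $q^{m^2(d-1)}$, and expanding $(q^{-d_i})_{m_i}=\prod_{j=1}^{m_i}(1-q^{-jd_i})$ yields the claimed formula.

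There is essentially no serious obstacle: the proof is a bookkeeping exercise in partition combinatorics. The one place requiring minor care is the interpretation when $\lambda_{i,m}=0$, in which case $\mu_i=\lambda_i$, the exponent contribution is zero, and the inner product is the empty product; this is consistent with interpreting $m_i=0$ when the relevant value is absent as a (positive) part. With this convention in place the derivation goes through uniformly.
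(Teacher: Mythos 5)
Your proposal is correct and follows essentially the same route as the paper: starting from Corollary \ref{ztau}, factoring the ratio $c(\tau)/c(\widetilde{\tau})$ over $i$, and using the relation between $\lambda_i'$ and $\mu_i'$ (deletion/addition of $\lambda_{i,m}$ columns of height $m$) to get the exponent shift $m^2\lambda_{i,m}$ and the single surviving factor $(q^{-d_i})_{m_i}$, then summing via $\sum_i d_i\lambda_{i,m}=d-1$. Your explicit handling of the multiplicity bookkeeping through conjugates and of the degenerate case $\lambda_{i,m}=0$ is a slightly more careful write-up of the same argument.
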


\begin{proof}
Continuing with the notation of Corollary \ref{ztau}, we have
\begin{align*}
\sigma(m,d;\tau)=\frac{c(\tau)}{c(\widetilde{\tau})}&= \prod_{i=1}^{t} \frac{c_{d_i}(\lambda_{i})}{c_{d_i}(\mu_{i})} \\
&=\prod_{i=1}^{t} \frac{q^{d_i \langle \lambda'_{i},\lambda'_{i} \rangle}\prod_{j \geq 1}{\left(q^{-d_i}\right)}_{m_j(\lambda_{i})}}{q^{d_i \langle \mu'_{i},\mu'_{i} \rangle}\prod_{j \geq 1}{\left(q^{-d_i}\right)}_{m_j(\mu_{i})}}.
\end{align*}
\begin{figure}[h!]
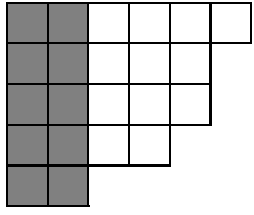

\centering
\ytableausetup{nosmalltableaux}
\begin{ytableau}
  *(gray) & *(gray)  & &  & & \\
  *(gray) & *(gray)  & &  & \\
*(gray) & *(gray)  & & &   \\
*(gray) & *(gray)  & & \\
*(gray) & *(gray) 
\end{ytableau}
\caption{If $m=5$ and $\lambda_{i}=(6,5,5,4,2)$, then $\mu_{i}=(4,3,3,2)$.} 
\label{figure}
\end{figure}
Observe that $\lambda_i'$ may be obtained from $\mu_i'$ by adding $\lambda_{i,m}$ new parts, each equal to $m$. Therefore, $\langle \lambda'_{i},\lambda'_{i} \rangle = m^2 \lambda_{i,m} + \langle \mu'_{i},\mu'_{i} \rangle$. If we remove all parts equal to $\lambda_{i,m}$ from $\lambda_i$, then the multiplicities of the remaining parts coincide with the multiplicities of the parts of $\mu_i$. These observations allow us to rewrite the product above as
\begin{align*}
  \sigma(m,d;\tau)&=\mathlarger\prod_{i=1}^{t}q^{d_i m^2\lambda_{i,m} } \left({q^{-d_i}}\right)_{m_i}\\
                   &=q^{m^2(d-1)}\prod_{i=1}^t \left({q^{-d_i}}\right)_{m_i}.\qedhere
\end{align*}
\end{proof}

We conclude by considering $\mu_q(n,k,d;\I)$ for $k=1$. In this case the problem is equivalent to counting $n$-tuples of coprime monic polynomials of a given degree over a finite field, a question that appears as an exercise in Knuth \cite[Exer. 5 of \S 4.6.1]{MR0286318}. An answer was given by Corteel, Savage, Wilf and Zeilberger \cite[Prop. 3]{MR1620873} (also see \cite[Thm. 4.1]{MR2745427}). 
\begin{proposition}
\label{unimod}
The number of coprime $n$-tuples of monic polynomials of degree $d$ over $\Fq$ is $q^{nd}-q^{n(d-1)+1}$. Equivalently, if $n$ monic polynomials of degree $d$ over $\Fq$ are chosen independently and uniformly at random, then the probability that they are coprime is $1-1/q^{n-1}$.
\end{proposition}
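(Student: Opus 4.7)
The plan is to prove Proposition \ref{unimod} by a classical recursion on $d$, following the argument of Corteel, Savage, Wilf and Zeilberger. Let $N(n,d)$ denote the number of coprime $n$-tuples of monic polynomials of degree $d$ over $\Fq$. Because a monic polynomial of degree $d$ is determined by its $d$ lower-order coefficients, the total number of $n$-tuples of monic polynomials of degree $d$ equals $q^{nd}$.

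I would first stratify these $n$-tuples by their greatest common divisor. Given an $n$-tuple $(f_1,\dots,f_n)$ of monic polynomials of degree $d$ with $\gcd(f_1,\dots,f_n)=g$ monic of degree $j$, the quotient $(f_1/g,\dots,f_n/g)$ is a coprime $n$-tuple of monic polynomials of degree $d-j$, and conversely every pair of a monic $g$ of degree $j$ together with a coprime $n$-tuple of monic polynomials of degree $d-j$ arises this way. Since there are $q^j$ monic polynomials of degree $j$, this stratification gives
\[
q^{nd} \;=\; \sum_{j=0}^{d} q^{j}\, N(n,d-j), \qquad N(n,0)=1.
\]

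Next I would solve the recursion. Subtracting $q$ times the analogous identity for $d-1$ yields a telescoping sum:
\[
q^{nd}-q\cdot q^{n(d-1)} \;=\; \sum_{j=0}^{d} q^{j} N(n,d-j) \;-\; \sum_{j=0}^{d-1} q^{j+1} N(n,d-1-j) \;=\; N(n,d),
\]
so $N(n,d)=q^{nd}-q^{n(d-1)+1}$. Dividing by $q^{nd}$ yields the asserted coprimality probability $1-q^{-(n-1)}$. An equivalent route is to recognize the identity above as a convolution and pass to generating functions, obtaining $\sum_{d\ge 0} N(n,d)\,t^{d} \;=\; (1-qt)/(1-q^n t)$, from which the same closed form is immediate.

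This proposition requires none of the machinery from the earlier sections, so the main subtlety is simply the bookkeeping in the telescoping step; there is no substantive obstacle.
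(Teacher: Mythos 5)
Your argument is correct, and it is worth noting that the paper itself offers no proof of Proposition \ref{unimod} at all: it is quoted with a citation to Corteel--Savage--Wilf--Zeilberger (and to Garc\'{\i}a-Armas--Ghorpade--Ram), so you are supplying a self-contained proof rather than reproducing one. Your route --- stratify all $q^{nd}$ tuples by their monic gcd $g$ of degree $j$, use the bijection $(f_1,\dots,f_n)\leftrightarrow\bigl(g,(f_1/g,\dots,f_n/g)\bigr)$ to get $q^{nd}=\sum_{j=0}^{d}q^{j}N(n,d-j)$, and then telescope (equivalently, read off $\sum_d N(n,d)t^d=(1-qt)/(1-q^nt)$) --- is the standard elementary derivation and is complete; note that it is in the spirit of the second cited source, whereas the CSWZ paper you invoke by name actually proceeds via their pentagonal-number sieve, so the attribution in your opening sentence is slightly off even though the mathematics is fine. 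The only bookkeeping point to make explicit is that the telescoping identity is used for $d\geq 1$ (for $d=0$ the count is $N(n,0)=1$, not $q^{0}-q^{-n+1}$, and the probability statement $1-1/q^{n-1}$ is likewise to be read for $d\geq 1$); with that caveat, and the sanity check that the formula correctly gives $0$ when $n=1$, there is no gap.
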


\begin{corollary}
Let $g \in \Fq[x]$ be a monic polynomial of degree $\delta\leq d$. Then
$$
\mu_q(n,1,d;(g))=
\begin{dcases}
  q^{n(d-\delta)}\left(1-q^{1-n}\right) &\delta<d, \\
  1     &\delta =d.
\end{dcases}
$$
\end{corollary}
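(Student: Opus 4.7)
The plan is to apply Lemma \ref{star} with $d_1 = \delta$, which reduces $\mu_q(n,1,d;(g))$ to $\mu_q(n,1,d-\delta;(1))$ since $\tilde g = g/g = 1$. The two branches of the piecewise formula then correspond to whether $d-\delta$ equals $0$ or is at least $1$. For $\delta = d$, the reduction yields $\mu_q(n,1,0;(1))$, and Corollary \ref{cor:ggg} (with $k=1$ and $g=1$) immediately gives the value $1$, establishing the second branch.

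For $\delta < d$, set $d' := d - \delta \geq 1$. I plan to invoke Proposition \ref{unimod} after observing that $\mu_q(n,1,d';(1))$ is in bijection with the set of coprime $n$-tuples of monic polynomials of degree $d'$ over $\Fq$. By definition, a typical element of $\M_q(n,1,d')$ is a column vector $(p_1, p_2, \ldots, p_n)^T$ in which $p_1$ is monic of degree $d'$, each $p_i$ for $i \geq 2$ has degree strictly less than $d'$, and the single invariant factor is $\gcd(p_1, \ldots, p_n)$. The map
$$
(p_1, p_2, \ldots, p_n) \;\longmapsto\; (p_1,\; p_1 + p_2,\; \ldots,\; p_1 + p_n)
$$
sends this set onto the $n$-tuples of monic polynomials of degree $d'$, because each $p_1 + p_i$ is monic of degree $d'$ (using that $p_1$ is monic of degree $d'$ and $\deg p_i < d'$), and it is easily inverted. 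Moreover, it preserves the gcd since $\gcd(p_1, p_1 + p_i) = \gcd(p_1, p_i)$. Proposition \ref{unimod} then yields
$$
\mu_q(n,1,d';(1)) \;=\; q^{nd'} - q^{n(d'-1)+1} \;=\; q^{n(d-\delta)}\bigl(1 - q^{1-n}\bigr),
$$
which matches the first branch of the claimed formula.

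The main (and essentially only) subtlety is the mismatch between the elements of $\M_q(n,1,d')$ — in which only the first coordinate is forced to be monic of degree $d'$, while the other coordinates have strictly smaller degree — and the $n$-tuples of uniformly-degree monic polynomials counted in Proposition \ref{unimod}. The shift bijection above reconciles the two settings in one line, so beyond verifying that this map preserves the gcd I do not anticipate any further genuine obstacle.
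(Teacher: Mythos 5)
Your proposal is correct and follows essentially the same route as the paper: reduce via Lemma \ref{star} to $\mu_q(n,1,d-\delta;(1))$, handle $\delta=d$ trivially, and for $\delta<d$ use the same shift trick $\gcd(g_1,\ldots,g_n)=\gcd(g_1,g_1+g_2,\ldots,g_1+g_n)$ to pass to coprime $n$-tuples of monic polynomials of degree $d-\delta$ and invoke Proposition \ref{unimod}. The only cosmetic difference is that you cite Corollary \ref{cor:ggg} for the $\delta=d$ case where the paper simply notes $\mu_q(n,1,0;(1))=1$ by definition.
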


\begin{proof}
  By Lemma \ref{star}, we have $\mu_q(n,1,d;(g))=\mu_q(n,1,d-\delta;(1))$. If $\delta=d$, then by definition we have $\mu_q(n,1,d-\delta;(1))=1$. If $\delta<d$, then set $d'=d-\delta$ and note that $\mu_q(n,1,d';(1))$ equals the number of polynomial matrices
  \begin{equation}
    \label{eq:k=1}
  \begin{bmatrix}
    g_1\\
    g_2\\
    \vdots\\
    g_n
  \end{bmatrix}
  \end{equation}
such that $\deg g_1 = d'$, $\deg g_i < d'$ for $2 \leq i \leq n$ and $\gcd (g_1,\ldots,g_n)=1$. Since
$$
\gcd (g_1,g_2,\ldots,g_n)=\gcd (g_1,g_2+g_1\ldots,g_n+g_1),
$$
it follows that $\mu_q(n,1,d';(1))$ is equal to the number of coprime $n$-tuples of monic polynomials of degree $d'$. Therefore $\mu_q(n,1,d';(1))=q^{nd'}(1-q^{1-n})$.
\end{proof}

\end{document}